\documentclass[11pt,reqno]{amsart}
\usepackage[foot]{amsaddr}
\usepackage{amsthm,amssymb,amsfonts}
\usepackage{amsmath}
\usepackage{amscd} 
\usepackage{microtype}

\usepackage[normalem]{ulem}
\usepackage[all]{xy}
\usepackage{setup}

\usetikzlibrary{shadows, decorations.pathmorphing, decorations.markings, calc, shapes}

\renewcommand{\P}{\mathbb{P}}

\usepackage{bbm}
\renewcommand{\1}{\mathbf{1}}

\newcommand{\ad}{\text{ and }}

\newcommand{\ceq}{\coloneqq} 

\newcommand{\e}{\varepsilon}

\newcommand{\bs}{\baselineskip}

\newcommand{\ind}[1]{\1_{[#1]}} 

\newcommand{\E}[1]{{\mathbf E}\left[#1\right]}				
\newcommand{\va}{{\mathbf{Var}}}

\newcommand{\cat}{\mathrm{Cat}}
\newcommand{\dtcs}{\mathrm{DTCS}}
\newcommand{\bal}{\mathrm{CBT}}

\title{Extremal subtrees of critical beta-splitting trees}

\author{Anna Brandenberger$^*$ \and Byron Chin$^\dagger$ \and Elchanan Mossel$^*$}

\address{$^*$Department of Mathematics, MIT}
\email{\{abrande,elmos\}@mit.edu}

\address{$^\dagger$School of Mathematics, Georgia Institute of Technology}
\email{bchin@gatech.edu}

\begin{document}

\begin{abstract}
We determine the most and least likely shapes for an instance of the critical beta-splitting tree via a connection to data compression and Huffman's minimum redundancy codes. This allows us to answer combinatorial questions about the distribution of clades posed by Aldous and Janson, stated as problem 7 in \cite{aldous2023critical}. 
\end{abstract}

\maketitle

\section{Introduction}

We study the distribution of \emph{clades} in critical beta-splitting trees. A clade, or \emph{fringe subtree}, of a rooted tree is a subtree induced by an internal vertex together with all of its descendants. Originally introduced by Aldous~\cite{aldous1996probability} as a model for phylogenetic trees, the critical beta-splitting tree has been the subject of renewed mathematical interest due to its qualitatively different behaviour from other models of random trees; see~\cite{aldous2023critical} for a comprehensive survey, and~\cite{aldous2024critical3,aldous2024critical4,aldous2024harmonic,aldous2025critical,brandenberger2025asymptoticsharmonicdescentchain,iksanov2025harmonic,kolesnik2025critical} for the recent series of papers on the model.

\subsection{The model}\label{subsec: defs}
For any $m \geq 2$, let the distribution $(q_{m}(i))_{i=1}^{m-1}$ be
\begin{equation}\label{eq:qmi}
    q_m(i) = \frac{m}{2h_{m-1}} \frac{1}{i(m-i)},
\end{equation}
where $h_{m-1} = \sum_{j=1}^{m-1} 1/j$ is the harmonic sum. The \emph{discrete-time critical beta-splitting tree} $\dtcs(n)$ is a random binary tree with $n$ leaves (labelled $[n]$) built as follows. First, split $[n]$ into left and right subtrees $\{1, \dots, L_n\}$ and $\{L_n + 1, \dots, n\}$ of respective sizes $L_n$ and $R_n \ceq n-L_n$, where $L_n$ has distribution $q_n(\cdot)$.
Then, recursively split each interval of size $m \geq 2$ into two subtrees according to $q_m(\cdot)$, stopping when $m=1$. The name $\dtcs$ distinguishes the model from the continuous-time version $\mathrm{CTCS}(n)$ of Aldous and Pittel~\cite{aldous2025critical}, in which an interval of size $m$ waits an $\mathrm{Exp}(h_{m-1})$ time before splitting according to $q_m(\cdot)$; the two models have the same tree shape.

The general $\beta$-splitting model of~\cite{aldous1996probability} is defined analogously, with $q_m(\cdot)$ replaced by the probability vector
\begin{equation}\label{eq:qbeta}
  q^\beta_m(i) \; \propto \; \frac{\Gamma(i+\beta+1)\,\Gamma(m-i+\beta+1)}{\Gamma(i+1)\,\Gamma(m-i+1)},
  \qquad 1 \le i \le m-1,
\end{equation}
which for $\beta = -1$ reduces to~\eqref{eq:qmi} and in general behaves like $(i(m-i))^{\beta}$ for $1 \ll i \ll m$. The value $\beta = -1$ is deemed \emph{critical} because of a discontinuity in the order of magnitude of leaf heights: they are of order $n^{-\beta-1}$ for $-2<\beta<-1$ and of order $\log n$ for $\beta > -1$, whereas at $\beta=-1$ a typical leaf height is of order $\log^2 n$; see~\cite{aldous1996probability,aldous2025critical,aldous2024critical4}.

\subsection{Notation} For a tree $T$, we define its size to be its number of leaves. For two rooted binary trees $T$ and $T'$ we say $T = T'$ if they are identical as rooted ordered trees (plane trees), and we say $T \cong T'$ if they are identical as unordered trees.
For example, writing $[T_1\,T_2]$ for the rooted ordered tree whose left and right subtrees are $T_1$ and $T_2$, we have $[[1 \, 2]\,3] \cong [3\, [1 \, 2]]$, but they are not identical as ordered trees. Throughout, $\log$ denotes the natural logarithm and $\log_2$ denotes the base-two logarithm.

\subsection{Main results}
We consider questions about clades of potentially growing size asked by Aldous and Janson in \cite[Open problem 7]{aldous2023critical}. 
For a realization of $\dtcs(n)$, write $N_n(\chi)$ for the number of clades of $\dtcs(n)$ whose shape is $\chi$, and let
\[
  K_n \ceq \sum_{\chi \text{ ordered}} \ind{N_n(\chi) \ge 1},
  \qquad
  K_n^u \ceq \sum_{\chi \text{ unordered}} \ind{N_n(\chi) \ge 1}
\]
be the numbers of distinct ordered and unordered clade shapes that occur.
\cite[Open problem 7]{aldous2023critical} 
asks for the asymptotics of
\begin{enumerate}
    \item the number $K_n$ of distinct clade shapes occurring in $\dtcs(n)$,
    \item the (size of the) largest clade that appears more than once in $\dtcs(n)$,
    \item and the (size of the) smallest clade that does not appear in $\dtcs(n)$.
\end{enumerate}
We make a connection between the critical beta-splitting distribution and Huffman's minimum-redundancy codes and use it to identify the extremal ordered shapes. 
By determining the most and least likely shapes for a given clade size, we answer the questions above in Theorems~\ref{thm:number}, \ref{thm:max} and \ref{thm:min} respectively, up to constant factors.

\begin{thm}\label{thm:min}
    The size of both the smallest ordered and the smallest unordered tree that do not appear is $(1 + o(1)) \frac{\log n}{\log\log\log n}$ with high probability as $n\to\infty$.
\end{thm}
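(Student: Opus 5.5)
The plan is a first/second moment argument, carried out in two steps. First, reduce from clade shapes to clade sizes. Second, use the extremal-shape result of the paper, obtained through the Huffman connection, which identifies the least likely ordered shape of size $k$. I expect this to be the comb (caterpillar) $[\cdots[[1\,2]\,3]\cdots k]$, whose probability is
\[
p_{\min}(k)=\prod_{m=2}^{k} q_m(1)=\prod_{m=2}^k \frac{m}{2h_{m-1}(m-1)} = \exp\bigl(-(1+o(1))\,k\log\log k\bigr).
\]
The key structural fact is the recursive (Markov) property of $\dtcs(n)$: conditionally on the sizes of a collection of disjoint clades, their shapes are independent copies of $\dtcs$ of those sizes. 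Clades of the same size $k$ can never be nested, so they are automatically disjoint. Writing $M_n(k)$ for the number of clades of size exactly $k$, we then get that, given $M_n(k)$, the count $N_n(\chi)$ for a shape $\chi$ of size $k$ is $\mathrm{Bin}(M_n(k),\P(\dtcs(k)=\chi))$.

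Next I need a quantitative handle on $M_n(k)$ for $k$ of order $\log n$. For the expectation I would use the known asymptotics of the clade-size spectrum, $\E{M_n(k)} \asymp n/k^{2}$ up to logarithmic factors, uniformly for $k\le n^{o(1)}$. This follows from the recursion $\E{M_n(k)}=\sum_i q_n(i)(\E{M_i(k)}+\E{M_{n-i}(k)})$ together with a supersolution/subsolution comparison. For the lower bound I only need a weak concentration statement: $M_n(k)\ge n^{1-o(1)}$ with high probability, uniformly over $k\le 2\log n$. I would get it by a second moment computation using the same recursion for $\E{M_n(k)^2}$, in which the cross terms from disjoint subtrees factor. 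Alternatively, I would explore the tree from the root down to depth $O(\log^2 n)$ and apply a Chernoff bound over the resulting independent subtrees.

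\emph{Upper bound.} Set $k=(1+\e)\log n/\log\log\log n$. Since $\log\log k=(1+o(1))\log\log\log n$, we have $p_{\min}(k)\le \exp(-(1+\e/2)\log n)$. The unordered comb has probability at most $2^{k}p_{\min}(k)$, which is still $n^{-1-\e/3}$. Hence
\[
\E{N_n(\text{comb})}\le \E{M_n(k)}\,2^k p_{\min}(k)\le n^{-\e/4}\to0,
\]
and by Markov's inequality the comb of size $k$ is absent with high probability, in both the ordered and the unordered sense.

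\emph{Lower bound.} Set $k_0=(1-\e)\log n/\log\log\log n$. For every $k\le k_0$ and every ordered shape $\chi$ of size $k$ we have $\P(\dtcs(k)=\chi)\ge p_{\min}(k)\ge n^{-1+\e/2}$. On the event $M_n(k)\ge n^{1-\e/4}$ this gives $\P(N_n(\chi)=0)\le \exp(-n^{\e/4})$. A union bound over the at most $\sum_{k\le k_0}\cat_{k-1}\le 4^{k_0}=n^{o(1)}$ ordered shapes (unordered shapes are fewer, and each one occurs if any of its orderings does) shows that every shape of size at most $k_0$ appears with high probability. I expect the main obstacle to be the uniform-in-$k$ concentration $M_n(k)\ge n^{1-o(1)}$. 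Only a polynomial lower bound is needed, not sharp asymptotics, so I would first try the crude route: show that with high probability at least $n^{1-o(1)}$ disjoint clades have size in $[k,2k]$. Each of these independently contains a clade of size exactly $k$ with probability at least $1/\mathrm{poly}(k)$, since the splitting chain from size $m$ hits $k$ with probability $\gtrsim 1/\log m$.
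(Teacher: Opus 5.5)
Your proposal is correct and follows the paper's proof. The upper bound is Markov's inequality on the count of size-$k$ caterpillars. The lower bound combines conditional independence of the size-$k$ clades, concentration of their number, the bound $p_{\min}(k)$ from Lemma~\ref{lem:min ordered}, and a union bound over at most $4^k$ shapes. The paper obtains that concentration as Lemma~\ref{lem:trials}, via Chebyshev and the variance bound of \cite{brandenberger2025asymptoticsharmonicdescentchain}, which is exactly the second-moment route you propose.

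For the unordered case you bypass Lemma~\ref{lem:min unordered} by paying the factor $2^{k-2}=n^{o(1)}$ for the orderings of $\cat(k)$. This is precisely the shortcut the paper notes, after Lemma~\ref{lem:max unordered}, would suffice.
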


\begin{thm}\label{thm:max}
    The size of the largest ordered tree that appears at least twice is in $[(1/\alpha - o(1))\log_2 n, (2/\alpha+o(1))\log_2 n]$ and the size of the largest unordered tree that appears at least twice is in $[(1/\alpha-o(1))\log_2 n, (2/(\alpha-1)+o(1))\log_2 n]$ with high probability, where $\alpha = 1 + \sum_{i=1}^\infty \frac{{\log_2} h_{2^i-1}}{2^i} = 1.637...$.
\end{thm}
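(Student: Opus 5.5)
The plan is to reduce everything to the probability $p(\chi)$ that $\dtcs(m)=\chi$ for a fixed ordered shape $\chi$ of size $m$, together with the expected number of size-$m$ clades in $\dtcs(n)$, which is of order $n/m^2$ up to slowly varying (harmonic) factors. Writing $-\log_2 p(\chi)=\sum_{v}\bigl(\log_2(2h_{m_v-1}) + \log_2 m_v^{-1}\cdots\bigr)$ over internal vertices $v$ with subtree sizes $m_v$, the product $\prod_v q_{m_v}(i_v)$ telescopes: since $q_m(i)=\frac{m}{2h_{m-1}}\frac{1}{i(m-i)}$, each internal factor $m_v$ in the numerator cancels against the same factor appearing in its parent's denominator. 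We get
\[
p(\chi)=\frac{m}{\prod_{v}2h_{m_v-1}},
\]
a product over the internal vertices, with leaves contributing $1$. The main step is therefore to maximize $\prod_v 2h_{m_v-1}$ (least likely shape) and minimize it (most likely shape). This is the Huffman-type connection alluded to above: the cost is additive over internal nodes with a concave increasing weight $\log(2h_{m-1})$ of subtree size, so merging greedily (balanced trees) minimizes the cost, and caterpillars maximize it.

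For the lower bound, I need to show that the most likely shape of size $m=(1/\alpha-\epsilon)\log_2 n$ appears at least twice. The most likely shape is the complete balanced tree $\bal$. For $m=2^k$ this gives
\[
-\log_2 p = \sum_{j=1}^{k}2^{k-j}\bigl(1+\log_2 h_{2^j-1}\bigr) - k \approx m\bigl(1+\sum_j 2^{-j}\log_2 h_{2^j-1}\bigr)=\alpha m,
\]
which explains $\alpha$. So the probability is $2^{-\alpha m(1+o(1))}=n^{-1+\alpha\epsilon}$. The expected count is then $n\cdot n^{-1+\Theta(\epsilon)}/\mathrm{polylog}\to\infty$. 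I will establish concentration by a second-moment argument. Clades of size $m$ at distinct vertices are either disjoint or nested, and nested pairs are rare since $m$ is tiny. Conditionally on the size-$m$ subtree structure, the shapes are independent, so the count is Binomial-like given $M_n$, the number of clades of size $m$. It therefore suffices to show $M_n$ is concentrated around $\Theta(n/m^2)$ up to $\log$ factors, via the recursive variance bounds for fringe counts (a standard martingale/recursion argument on $\dtcs$). The unordered case follows immediately, since $\cong$-classes only increase probability.

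For the upper bound on ordered shapes, I will use a union bound: $\mathbb{P}(\exists\chi$ of size $m'\ge m$ appearing twice$)\le\sum_{m'}\mathbb{E}[\#\text{pairs}]$. For disjoint pairs this is at most $\sum_\chi (\mathbb{E} M_{m'})^2 p(\chi)^2\lesssim n^2\max_\chi p(\chi)$, because $\sum_\chi p(\chi)=1$. This needs $\max_\chi p(\chi)\le 2^{-\alpha m'(1-o(1))}\ll n^{-2}$, i.e.\ $m'\ge(2/\alpha+\epsilon)\log_2 n$. Nested pairs need more care: two nested copies of the same shape are impossible, since they have the same size but distinct vertices. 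I also need the pair-correlation structure, namely that, given the sizes, disjoint clades are independent, which holds by the recursive construction.

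For unordered shapes the bound is $\sum_{\chi^u}P(\chi^u)^2\le\max P(\chi^u)$ with $P(\chi^u)\le 2^{\#\text{symmetric-free internal nodes}}p(\chi)$. I will bound $\max_{\chi^u}P(\chi^u)\le 2^{-(\alpha-1)m(1-o(1))}$ by absorbing a factor at most $2$ per internal vertex into $\prod 2h$, i.e.\ replacing $2h$ by $h$ and redoing the Huffman optimization, whose optimum is still balanced. This gives a threshold of $(2/(\alpha-1))\log_2 n$. I expect the main obstacle to be the rigorous Huffman-type extremality statement for non-dyadic $m$, together with the uniform estimate $\min_\chi\sum_v\log_2(2h_{m_v-1})\ge\alpha m-o(m)$. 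I would prove it by an exchange argument showing that an optimal tree has all splits as balanced as possible, and then compute its cost by recursion on $m$, comparing with $\alpha m$ via the summability of $\log_2 h_{2^i}/2^i$.
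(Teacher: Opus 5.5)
Your probabilistic skeleton is the same as the paper's. For the lower bound you count copies of $\bal(m)$ among the $\Theta(n\log m/m^2)$ conditionally independent size-$m$ clades. For the upper bound you use the birthday bound $n^2\max_\chi p(\chi)$, with no nested same-size pairs. For unordered shapes you use the factor $2^{m-1}$ from orderings. (Your $(\E{M_{m'}})^2$ should be $\E{M_{m'}^2}$; this is harmless since $M_{m'}\le n/m'$.)

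The genuine gap is in the extremal input that both directions need: $\max_\chi p(\chi)=p(\bal(m))=2^{-(\alpha+o(1))m}$ for \emph{every} $m$. The relevant sizes near $(1/\alpha-\epsilon)\log_2 n$ and $(2/\alpha+\epsilon)\log_2 n$ are generally not powers of two, and you only computed $m=2^k$. You plan to prove this by an exchange argument showing that an optimal tree has all splits as balanced as possible. That claim is false.

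Take $m=6$ and your formula $p=m/\prod_v 2h_{m_v-1}$. The tree $\bal(6)$ has root split $(4,2)$, internal sizes $6,4,2,2,2$, and $p=6/(8\cdot 2h_5\cdot 2h_3)\approx 0.0448$. The evenly split tree $(3,3)$ has internal sizes $6,3,3,2,2$ and $p=6/(4\cdot 2h_5\cdot (2h_2)^2)\approx 0.0365$. The comparison reduces to $h_3=11/6<h_2^2=9/4$.

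The defect persists asymptotically. For $m=3\cdot 2^K$, the evenly split tree has $-\log_2 p\approx 1.70\,m$. Meanwhile $\bal(m)=[\bal(2^{K+1})\,\bal(2^K)]$, with root split $(2m/3,m/3)$, achieves $\alpha m+o(m)\approx 1.637\,m$. So your exchange argument cannot succeed, and the tree it would single out is not the minimizer and does not give the constant $\alpha$.

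Your Huffman intuition is right; the error is in translating greedy merging into ``all splits even.'' Huffman's algorithm on $m$ unit weights produces $\bal(m)$, whose subtrees are mostly complete trees of power-of-two size rather than halves.

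The paper obtains this from Glassey--Karp: the Huffman tree minimizes $\sum_v g(w_v)$ for every nondecreasing concave $g$, a weak-majorization statement about the vector of internal sizes, applied with $g(x)=\log h_{x-1}$. It then handles non-dyadic $m$ by a sandwich argument:
\begin{itemize}
\item strip the top $\lfloor 2\log_2\log m\rfloor$ levels of $\bal(m)$;
\item what remains is copies of $\bal(2^d)$ and $\bal(2^{d+1})$, plus one exceptional complete tree of size $O(m/\log^2 m)$;
\item each discarded split probability is at least $m^{-2}$, so the total loss is only a factor $2^{o(m)}$.
\end{itemize}
With that lemma in hand, the rest of your argument goes through. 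The unordered upper bound then needs no separate re-optimization, since $\P(\dtcs(m)\cong T)\le 2^{m-1}\max_\chi p(\chi)$ directly: every tree has exactly $m-1$ internal vertices.
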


\begin{thm}\label{thm:number}
    The numbers  of {distinct ordered and unordered subtree shapes $K_n$ and $K_n^u$ are both} $\Theta\left(\frac{n\log\log n}{\log n}\right)$ with high probability.
\end{thm}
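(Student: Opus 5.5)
We prove $K_n,K_n^u=\Theta(n\log\log n/\log n)$ by splitting clades at a size threshold $m$. On one side, the number of shapes of size at most $m$ is small. On the other, large clades are few and cannot repeat often. The upper bound then comes from counting, and the lower bound from showing that clades of size about $m$ are almost all distinct.

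\emph{Expected clade counts.} We first record the expected number of clades of size $k$ in $\dtcs(n)$, denoted $\mathbb{E}[C_n(k)]$. Each clade corresponds to an internal vertex of the tree. The recursion $a_n(k)=\ind{n=k}+\sum_i q_n(i)\,(a_i(k)+a_{n-i}(k))$ shows that $a_n(k)\asymp n/(k^2 h_{k-1})$ for $2\le k\le n/2$, say. Heuristically, a clade of size $k$ is split off from a parent of size $m>k$ at rate about $1/k$ per unit of time. The boundedness of $\sum_k k\,a_n(k)/n$ after summing over $k$ fits with the total leaf height being of order $n\log^2 n$. We establish this via the renewal theory of the harmonic descent chain, or, more cheaply, by induction with explicit upper and lower comparison functions. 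We also need concentration. Clades of size $k\le n^{1/2}$ arise in nearly independent subtrees, so a second moment bound gives $C_n(k)$ within a constant factor of its mean, uniformly over the relevant range of $k$, with high probability.

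\emph{Upper bound.} Since $K_n^u\le K_n$, it suffices to bound $K_n$. Fix $m=c\log n/\log\log n$ with $c$ small. The number of ordered binary shapes with at most $m$ leaves is at most $4^m\le n^{1/2}$, which is negligible. Each shape of size greater than $m$ contributes at most its count of clades, so the total contribution is at most $\sum_{k>m}C_n(k)$. By the estimate above this sum is $O\bigl(n\sum_{k>m}1/(k^2\log k)\bigr)=O(n/(m\log m))=O\bigl(n\log\log n/\log n\bigr)$. Markov's inequality, or the concentration step, turns this into a high-probability bound.

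\emph{Lower bound.} Take $k\in[m,2m]$ with $m=C\log n/\log\log n$ and $C$ large. The number of such clades is $\sum_{k=m}^{2m}C_n(k)\asymp n/(m\log m)\asymp n\log\log n/\log n$ with high probability. We must show that most of these clades have pairwise distinct unordered shapes. The expected number of unordered-isomorphic pairs among size-$k$ clades is at most $\mathbb{E}[C_n(k)]^2\max_{\chi}p_k(\chi)$, where $p_k(\chi)=\P(\dtcs(k)\cong\chi)$. Here we use the paper's Huffman-code characterization of the most likely shape. Its probability is roughly $2^{-\alpha k}$ up to lower-order factors, and in any case we only need $\max_\chi p_k(\chi)\le e^{-ck\log\log k}$ or even $2^{-ck}$. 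Choosing $C$ large makes $n^2 2^{-ck}\cdot(\text{count})^{-1}$ tiny relative to the count. Then $\mathbb{E}[\#\text{repeated pairs}]=o\bigl(n\log\log n/\log n\bigr)$, and since distinct shapes $\ge$ clades $-$ repeated pairs, we get $K_n^u=\Omega(n\log\log n/\log n)$. Some care is needed because clades are not independent: nested clades have different sizes, and disjoint clades in the fringe are conditionally independent given their sizes. This justifies bounding the pair probability by $p_k$-collisions. Conditioning on the sizes of the subtrees, two disjoint size-$k$ clades are independent $\dtcs(k)$ copies.

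\emph{Main obstacle.} The main obstacle is the sharp first-moment estimate $\mathbb{E}[C_n(k)]\asymp n/(k^2\log k)$ together with its concentration. The $\log k$ factor is exactly what produces the $\log\log n$ in the answer. I would attack it by analysing the recursion through the continuous-time embedding $\mathrm{CTCS}(n)$, where fragments of size $\approx xn$ behave like a fragmentation process with dislocation measure $\propto dx/(x(1-x))$. This yields a Mellin/renewal computation for the occupation density.
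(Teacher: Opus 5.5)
There is a genuine gap. Your skeleton matches the paper's: bound the shapes below a threshold by $4^k$, bound the number of clades above it, and get the lower bound from a window of sizes where clades are essentially pairwise distinct. But your first-moment estimate has the $\log k$ on the wrong side, and this puts both thresholds at the wrong scale. The correct order is $\mathbb{E}[N_n(k)]\asymp n\log k/k^2$ (Lemma~\ref{lem:trials}). To see this, note that the exact identity $a_n(k)=\sum_{m=k+1}^{n}a_n(m)\,2q_m(k)$ yields the limiting equation $\mu(k)=\sum_{m>k}\mu(m)\,2q_m(k)$. This is solved by $\mu(k)\propto h_{k-1}/(k(k-1))$, because $\sum_{m>k}\frac{1}{(m-1)(m-k)}=\frac{h_{k-1}}{k-1}$. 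Your own sanity check points the same way. The sum $\sum_k k\,N_n(k)$ is the total leaf depth, so $\sum_k k\,a_n(k)/n$ is the mean leaf depth, which is of order $\log^2 n$ and not bounded. That matches $\sum_{k\le n}\log k/k$, whereas your formula would give $\sum_{k\le n}1/(k\log k)\asymp\log\log n$. Your arithmetic is also off: for $m\asymp\log n/\log\log n$ one has $m\log m\asymp\log n$, so $n/(m\log m)\asymp n/\log n$, not $n\log\log n/\log n$. With the correct estimate, your upper bound breaks: above the threshold $c\log n/\log\log n$ there are $\asymp n\log m/m\asymp n(\log\log n)^2/\log n$ clades. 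The threshold must be of order $\log n$, e.g.\ $\frac13\log_2 n$ as in the paper. There, at most $4n^{2/3}$ small shapes occur and the larger clades number $O(n\log\log n/\log n)$. The $\log\log n$ is exactly the factor $\log k$ evaluated at $k\asymp\log n$.

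The lower bound cannot be rescued at your scale. For $k\le 2m=2C\log n/\log\log n$ there are fewer than $4^{2m+1}=n^{o(1)}$ ordered shapes in total. So, by the same counting you use for the upper bound, the window $[m,2m]$ contributes at most $n^{o(1)}$ distinct shapes for every fixed $C$. In your collision estimate, the error is in $\max_\chi p_k(\chi)$. By Lemma~\ref{lem:max} the most likely shape has probability $2^{-(\alpha+o(1))k}$, whereas $e^{-(1+o(1))k\log\log k}$ is the probability of the \emph{least} likely shape, the caterpillar. Hence $n^2\max_\chi p_k(\chi)=n^{2-o(1)}$ at $k\asymp\log n/\log\log n$, and repeats dominate. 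Collisions become negligible only for $k\ge A\log_2 n$ with $A$ a large enough constant. The paper takes $k\in[4\log_2 n,5\log_2 n]$, which lies beyond $\frac{2}{\alpha-1}\log_2 n$. In that range Theorem~\ref{thm:max} rules out any repeated ordered or unordered clade with high probability, and Lemma~\ref{lem:trials} gives $\asymp n\log\log n/\log n$ clades. Finally, your uniform estimate for all $k\le n/2$ is only asserted. The paper has $\Theta(n\log k/k^2)$ with concentration only for $k=n^{o(1)}$. It handles $k\ge(\log_2 n)^{12}$ with a separate first-moment bound from the harmonic descent chain: monotonicity of $\mathbb{E}[N_m(k)]/m$ in $m$, together with $x_{k+r}\le 3x_k/(rh_k)$ and Markov's inequality. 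The sizes $k\ge n-n^{1/6}$ are covered by the trivial count.
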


\subsection{Related work}\label{subsec:related}
Fringe trees have been studied extensively in other models of random trees. Aldous~\cite{A:91} carried out a general study covering many classes of models; more recent work includes analyses for families of branching processes~\cite{cai2017study,HJ:17,janson2016asymptotic} as well as for Patricia tries and compressed binary search trees~\cite{janson2026fringetreespatriciatries}. Janson's work~\cite{janson2026fringetreespatriciatries} also proves central limit theorems for the beta-splitting trees with parameters $\beta = -3/2, 0, \infty$, and proves convergence in probability
for the critical parameter $\beta = -1$; the full central limit theorem for the number of clades of any fixed size or shape was obtained in prior work of the authors~\cite{brandenberger2025asymptoticsharmonicdescentchain}. 

All three of our main theorems have well-studied analogues for other tree models, where the motivation comes from DAG compression of trees: the number of distinct fringe subtrees is the size of the minimal DAG representing the tree. 
Flajolet, Sipala and Steyaert~\cite{flajolet1990analytic} showed that a uniformly random binary (or plane) tree with $n$ nodes has $\Theta(n/\sqrt{\log n})$ distinct fringe subtrees. 
On the other hand, random binary search trees have $\Theta(n/\log n)$ many, as shown by Flajolet, Gourdon and Mart{\'\i}nez~\cite{flajolet1997patterns} 
and Devroye~\cite{devroye1998richness}. This is an analogue of our Theorem~\ref{thm:number}, whose scaling interestingly differs only by a $\log \log n$ factor. Devroye's work also determines the largest $K$ such that \emph{every} shape of size at most $K$ occurs in a random binary search tree, and is thus the direct analogue of our Theorem~\ref{thm:min}; the analogue of Theorem~\ref{thm:max} for simply generated trees is due to Ralaivaosaona and Wagner~\cite{ralaivaosaona2015repeated}.
Sharp results in this direction, including the constant in the $\Theta(n/\log n)$ asymptotics for binary search trees, were obtained by Seelbach Benkner and Wagner~\cite{seelbach2020collection,seelbach2022distinct,wagner2024distinct}.

\subsection*{Acknowledgements}
 A.B.\ is supported by NSERC PGS-D. A.B.\ and B.C.\ were previously supported by NSF GRFP 2141064. E.M.\ is partially supported by ARO MURI N00014241274, by Vannevar Bush Faculty Fellowship ONR-N00014-20-1-2826 and by a Simons Investigator Award. All ideas, proofs and main text were generated and written by the authors. AI was used only for light proofreading and copy-editing.

\section{Analysis of subtree distributions}\label{sec:subtree-distributions}

\subsection{Ordered subtrees}
Let $p_{\max}(k)$ and $p_{\min}(k)$ respectively denote the probability of the most and least likely ordered binary trees of size $k$. Conditioning on the two sizes in the root split and using the independence of the two descendant trees gives the recursions
\[
p_{\max}(k) = \max_{1 \leq i\leq k-1} q_k(i) p_{\max}(i) p_{\max}(k-i) = \frac{k}{2h_{k-1}} \max_{1 \leq i\leq k-1} \frac{p_{\max}(i)}{i} \frac{p_{\max}(k-i)}{k-i}
\]
and analogously for $p_{\min}$ with $p_{\min}(1) = p_{\max}(1) = 1$. Letting $a_{\min} (k) \ceq p_{\min}(k)/k$ and $a_{\max}(k) \ceq p_{\max}(k)/k$,
we can work with 
\begin{align}
    a_{\max}(k) &= \frac{1}{2h_{k-1}} \max_{1 \leq i \leq k-1} a_{\max}(i)a_{\max}(k-i), \ad \label{eq:amax-def}\\
    a_{\min}(k) &= \frac{1}{2h_{k-1}} \min_{1 \leq i \leq k-1} a_{\min}(i)a_{\min}(k-i). \label{eq:amin-def}
\end{align}
Notice that \eqref{eq:amax-def} and \eqref{eq:amin-def} correspond to placing a weight of $\tfrac{1}{2h_{s_v - 1}}$ at each internal vertex $v$ with size $s_v$ (i.e., $s_v$ leaves in its subtree) and finding a binary tree with $k$ leaves that respectively maximizes or minimizes the total product. In particular, the most/least likely trees correspond respectively to trees $T$ that minimize/maximize 
\begin{equation}\label{eq:tree-cost}
    G(T) \ceq \sum_{v \in I(T)} g(s_v)
\end{equation}
where $g(x) \ceq \log(h_{x-1})$ for $x > 1$ and $I(T)$ is the set of internal vertices of $T$. Observe that $g(x)$ in \eqref{eq:tree-cost} is non-decreasing and strictly concave. Consequently, these optimization problems map directly onto a classic data compression setting from coding theory~\cite{huffman1952method}. Assign fixed, non-negative weights $(w_u)_{u \in L(T)}$ to the leaves of a tree and set internal vertex weights $w_v$ to be the sum of the leaf weights in the subtree rooted at $v$. Our setting corresponds to assigning a weight of 1 for each leaf. This implies that the order of trees in terms of likelihood is exactly the order of trees in terms of cost with respect to $g$. 

We use this relationship to identify that the most likely and least likely shapes are respectively the complete binary tree and the caterpillar tree (also referred to as the comb or totally pectinate tree). We formally define these below; see also Figure~\ref{fig:cbt-and-cat} for an illustration.

\begin{figure}[hbtp]
    \centering
    \begin{tikzpicture}[x=0.75pt,y=0.75pt,yscale=-1,xscale=1]

\draw    (190,30) -- (210,50) ;
\draw    (210,50) -- (230,70) ;
\draw    (250,90) -- (270,110) ;
\draw    (270,110) -- (290,130) ;
\draw [shift={(290,130)}, rotate = 45] [color={rgb, 255:red, 0; green, 0; blue, 0 }  ][fill={rgb, 255:red, 0; green, 0; blue, 0 }  ][line width=0.75]      (0, 0) circle [x radius= 2.01, y radius= 2.01]   ;
\draw    (210,50) -- (190,70) ;
\draw [shift={(190,70)}, rotate = 135] [color={rgb, 255:red, 0; green, 0; blue, 0 }  ][fill={rgb, 255:red, 0; green, 0; blue, 0 }  ][line width=0.75]      (0, 0) circle [x radius= 2.01, y radius= 2.01]   ;
\draw    (230,70) -- (210,90) ;
\draw [shift={(210,90)}, rotate = 135] [color={rgb, 255:red, 0; green, 0; blue, 0 }  ][fill={rgb, 255:red, 0; green, 0; blue, 0 }  ][line width=0.75]      (0, 0) circle [x radius= 2.01, y radius= 2.01]   ;
\draw    (250,90) -- (230,110) ;
\draw [shift={(230,110)}, rotate = 135] [color={rgb, 255:red, 0; green, 0; blue, 0 }  ][fill={rgb, 255:red, 0; green, 0; blue, 0 }  ][line width=0.75]      (0, 0) circle [x radius= 2.01, y radius= 2.01]   ;
\draw    (270,110) -- (250,130) ;
\draw [shift={(250,130)}, rotate = 135] [color={rgb, 255:red, 0; green, 0; blue, 0 }  ][fill={rgb, 255:red, 0; green, 0; blue, 0 }  ][line width=0.75]      (0, 0) circle [x radius= 2.01, y radius= 2.01]   ;
\draw    (190,30) -- (170,50) ;
\draw [shift={(170,50)}, rotate = 135] [color={rgb, 255:red, 0; green, 0; blue, 0 }  ][fill={rgb, 255:red, 0; green, 0; blue, 0 }  ][line width=0.75]      (0, 0) circle [x radius= 2.01, y radius= 2.01]   ;
\draw    (230,70) -- (250,90) ;
\draw    (80,50) -- (40,70) ;
\draw    (40,70) -- (60,90) ;
\draw    (40,70) -- (20,90) ;
\draw    (60,90) -- (70,110) ;
\draw [shift={(70,110)}, rotate = 63.43] [color={rgb, 255:red, 0; green, 0; blue, 0 }  ][fill={rgb, 255:red, 0; green, 0; blue, 0 }  ][line width=0.75]      (0, 0) circle [x radius= 2.01, y radius= 2.01]   ;
\draw    (60,90) -- (50,110) ;
\draw [shift={(50,110)}, rotate = 116.57] [color={rgb, 255:red, 0; green, 0; blue, 0 }  ][fill={rgb, 255:red, 0; green, 0; blue, 0 }  ][line width=0.75]      (0, 0) circle [x radius= 2.01, y radius= 2.01]   ;
\draw    (20,90) -- (30,110) ;
\draw [shift={(30,110)}, rotate = 63.43] [color={rgb, 255:red, 0; green, 0; blue, 0 }  ][fill={rgb, 255:red, 0; green, 0; blue, 0 }  ][line width=0.75]      (0, 0) circle [x radius= 2.01, y radius= 2.01]   ;
\draw    (20,90) -- (10,110) ;
\draw [shift={(10,110)}, rotate = 116.57] [color={rgb, 255:red, 0; green, 0; blue, 0 }  ][fill={rgb, 255:red, 0; green, 0; blue, 0 }  ][line width=0.75]      (0, 0) circle [x radius= 2.01, y radius= 2.01]   ;
\draw    (80,50) -- (120,70) ;
\draw    (120,70) -- (140,90) ;
\draw [shift={(140,90)}, rotate = 45] [color={rgb, 255:red, 0; green, 0; blue, 0 }  ][fill={rgb, 255:red, 0; green, 0; blue, 0 }  ][line width=0.75]      (0, 0) circle [x radius= 2.01, y radius= 2.01]   ;
\draw    (120,70) -- (100,90) ;
\draw [shift={(100,90)}, rotate = 135] [color={rgb, 255:red, 0; green, 0; blue, 0 }  ][fill={rgb, 255:red, 0; green, 0; blue, 0 }  ][line width=0.75]      (0, 0) circle [x radius= 2.01, y radius= 2.01]   ;

\end{tikzpicture}
    \caption{Complete binary tree $\bal(6)$ (left) and Caterpillar tree $\cat(6)$ (right).}
    \label{fig:cbt-and-cat}
\end{figure}

\begin{defn}
    Let $\bal(k)$ be the complete binary tree with $k$ leaves, that is, where every level is completely full except possibly the last, which is filled from the left.
    See Figure~\ref{fig:cbt-and-cat}~(left) for a diagram. 
\end{defn}

\begin{defn}
    Let $\cat(k)$ be the rooted ordered binary tree of size $k$ where all splits are of size 1. See Figure~\ref{fig:cbt-and-cat}~(right) for a diagram.
    Note that $\cat(k)$ has $2^{k-2}$ orderings which occur with equal probability.
\end{defn}

\begin{lem}[Most likely ordered trees]\label{lem:max}
    Out of all rooted ordered binary trees of size $k$, the trees occurring with maximal probability are orderings of $\bal(k)$. Moreover, 
    \[ \P(\dtcs(k) = \bal(k)) = 2^{-(\alpha + o(1)) k} \]
    where $\alpha = 1 + \sum_{i=1}^\infty \frac{\log_2 h_{2^i-1}}{2^i} = 1.637...$, {as $k\to\infty$}.
\end{lem}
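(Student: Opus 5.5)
We split the proof into a combinatorial part, showing that the complete binary tree minimizes the cost $G(T)$ in \eqref{eq:tree-cost}, and an analytic part, computing $G(\bal(k))$ asymptotically.

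\textbf{Optimality of $\bal(k)$.} It suffices to show that $\bal(k)$ minimizes $G(T)=\sum_{v\in I(T)} g(s_v)$ over binary trees with $k$ leaves, where $g(x)=\log h_{x-1}$ is non-decreasing and concave with $g(2)=0$. We argue by strong induction on $k$ via the recursion \eqref{eq:amax-def}. Write $F(k)=\min_T G(T)$, so that $F(k)=g(k)+\min_{1\le i\le k-1}\bigl(F(i)+F(k-i)\bigr)$. The claim is that $F(k)=G(\bal(k))$ and that the minimum is attained at the split of $\bal(k)$: with $2^m<k\le 2^{m+1}$, the left subtree has size $\min(2^m,k-2^{m-1})$.

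To compare splits, we first establish an exchange lemma in the spirit of Huffman's argument. Take any tree with two internal vertices $u,v$ with $s_u>s_v+1$ and $u$ not an ancestor of $v$. Moving leaves from the heavier side to the lighter side changes the sizes along the two root paths, and concavity of $g$ shows the cost does not increase. Concretely, we swap a subtree of $u$ with a leaf-sibling at the shallower location. Repeating this, any optimal tree can be converted to one in which all leaf depths differ by at most one. Such trees have the same multiset of internal sizes at each level, namely sizes as equal as possible, which is exactly the profile of $\bal(k)$.

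An alternative route is to use the classical fact (Glassey--Karp, and Parker's generalization of Huffman to concave/"Schur" cost functions) that for unit leaf weights and concave $g$, the tree with balanced level sizes is optimal. We would try to cite this and verify that its hypotheses hold. The main obstacle is making the exchange step rigorous: moving a leaf changes the sizes of every ancestor, so we must pair up the ancestors of the two positions and use $g(a+1)-g(a)\ge g(b+1)-g(b)$ for $a\le b$, which follows from concavity. Strictness, needed to conclude that only orderings of $\bal(k)$ attain the maximum, uses strict concavity.

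\textbf{Asymptotics.} Restrict first to $k=2^m$. Here $\bal(k)$ has $2^{m-j}$ internal vertices of size $2^j$ for $j=1,\dots,m$. The probability equals the product over internal vertices $v$ of $q_{s_v}(s_v/2)$, where $q_{2s}(s)=\frac{2s}{2h_{2s-1}s^2}$. Equivalently, it is $k\prod_v a$-factors, namely $k\cdot 2^{-(k-1)}\prod_{j=1}^m h_{2^j-1}^{-2^{m-j}}$, using $\prod_v s_v^{-1}\cdots$ bookkeeping through $a_{\max}(k)=p_{\max}(k)/k$. Taking $\log_2$ gives
\[
-\log_2 p = k-1-\log_2 k+\sum_{j=1}^m 2^{m-j}\log_2 h_{2^j-1} = k\Bigl(1+\sum_{j\ge1}\frac{\log_2 h_{2^j-1}}{2^j}\Bigr)+o(k),
\]
since the tail $\sum_{j>m}$ is $O(m2^{-m})$.

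For general $k$ with $2^m<k\le 2^{m+1}$, the level sizes in $\bal(k)$ are $\lfloor k/2^{m-j}\rfloor$ or $\lceil k/2^{m-j}\rceil$ up to $O(1)$. The number of vertices at height $j$ is about $k/2^j$, and each has size $2^j+O(2^j)$. Since $h_{x}=\log x+O(1)$ varies slowly, $\log_2 h_{s}=\log_2 h_{2^j-1}+o(1)$ uniformly in the relevant range once $j$ is large. The contribution from small $j$ can be handled exactly: at the bottom levels, vertices have sizes $2$ or $3$ in proportions depending on $k/2^m$. The error is $O(k/\log k)$ or similar, which is $o(k)$. The split factor $\prod q$ likewise contributes $2^{-k+o(k)}$, since $\prod_v \frac{s_v}{s_{\mathrm{left}}s_{\mathrm{right}}}$ telescopes to $k$ over leaves of size 1, and the $\frac12$ factors give $2^{-(k-1)}$. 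This yields $2^{-(\alpha+o(1))k}$.
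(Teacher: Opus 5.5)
Your power-of-two computation is correct and is the paper's, but there are genuine gaps in both the optimality argument and the general-$k$ asymptotics. For optimality, your exchange lemma runs the wrong way. Since $g$ is concave, $\sum_v g(s_v)$ rewards \emph{unequal} sizes, so moving leaves from a heavier subtree to a lighter one can strictly increase the cost. In $\bal(6)$, take $u,v$ to be the two children of the root (sizes $4$ and $2$). Moving one leaf across gives the tree with root split $(3,3)$, and the cost changes from $g(6)+g(4)$ to $g(6)+2g(3)$, where $g(4)=\log\frac{11}{6}<\log\frac94=2g(3)$. Both trees have all leaf depths in $\{2,3\}$, so your next claim also fails: trees whose leaf depths differ by at most one need not share a multiset of internal sizes. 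Moreover, ``sizes as equal as possible'' describes the $(3,3)$ tree, not $\bal(6)$, whose root split $\min(2^m,k-2^{m-1})=4$ you had identified correctly. What singles out $\bal(k)$ is that the deepest leaves are packed together, so at each depth all but at most one vertex roots a perfect tree. Proving that this packing is optimal for every non-decreasing concave $g$ is exactly the content of the Glassey--Karp theorem. Your fallback (Huffman trees are optimal for all such $g$, and the Huffman tree for $k$ unit weights is $\bal(k)$) is the route that works, and it is what the paper does.

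For general $k$ you again describe the even-split tree (sizes $\approx k/2^d$ at depth $d$, bottom vertices of size $2$ or $3$) rather than $\bal(k)$, and the argument does not survive this. The levels of small height contain $\Theta(k)$ vertices, so slow variation of $h_s$ gives no $o(k)$ control there, and the constant depends on the exact sizes. For the even-split tree with $k=3\cdot 2^m$, your own identity $\P(\dtcs(k)=T)=k2^{-(k-1)}\prod_{v\in I(T)}h_{s_v-1}^{-1}$ gives exponent $1+\frac13\sum_{i\ge0}2^{-i}\log_2 h_{3\cdot2^i-1}\approx1.70$, not $\alpha\approx1.637$. So the claimed $o(k)$ error cannot hold for the tree you describe. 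The missing ingredient is the correct structure. Write $k=2^m+r$ with $0<r\le2^m$. Then every vertex of $\bal(k)$ at depth $d$, except at most one, roots a perfect tree of size $2^{m-d}$ or $2^{m+1-d}$. Hence the maximal perfect subtrees contribute $k(\alpha-1)+O(\log k\log\log k)$ to $\sum_v\log_2 h_{s_v-1}$, and the single boundary path contributes $O(\log k\log\log k)$. The paper does the equivalent thing more crudely. It deletes the top $\lfloor 2\log_2\log k\rfloor$ levels, leaving copies of $\bal(2^d)$ and $\bal(2^{d+1})$ plus one exceptional piece of size $O(k/\log^2 k)$. It then sandwiches the probability using the power-of-two case and the fact that every split probability is at least $k^{-2}$.
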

\begin{proof}
    Glassey and Karp \cite[Theorem 1]{glassey1976optimality} prove that the Huffman tree constructed from any fixed leaf weights $(w_u)$ minimizes not only the standard cost $\sum_{v \in I(T)} w_v$ (which governs the expected depth of the leaves), but also the general cost $\sum_{v \in I(T)} g(w_v)$ for any non-decreasing, concave function $g$.
    Our setting, where $w_v$ equals the number of descendant leaves $s_v$, corresponds exactly to assigning uniform leaf weights $w_u = 1$ for all $u \in L(T)$. Therefore, the trees that minimize $G(T)$ are simply the Huffman trees for $k$ uniform weights. This is well-known (see, e.g., \cite[Section 4]{glassey1976optimality}) to be exactly the complete binary tree $\bal(k)$, up to ordering.

    To compute the probability, we start by assuming $k = 2^h$. At height $h-i$ there are $2^{h-i}$ internal vertices whose descendant subtrees have size $2^i$, so multiplying their split probabilities from \eqref{eq:qmi} gives
    \begin{align*}
        \log_2 \P(\dtcs(k) = \bal(k)) &= \log_2\prod_{i=1}^h \left(\frac{2^i}{2\cdot (2^{i-1})^2 \cdot h_{2^i-1}}\right)^{2^{h-i}} \\
        &= \sum_{i=1}^h 2^{h-i}\log_2\left(\frac{1}{2^{i-1} \cdot h_{2^i-1}}\right) \\
        &= - 2^h \left(\sum_{i=1}^h \frac{i-1}{2^i} + \frac{\log_2 h_{2^i-1}}{2^i}\right) \\
        &= -(\alpha+o(1))k.
    \end{align*}
    Now let $k$ be arbitrary and take $h=\lfloor 2\log_2\log k\rfloor$. After removing the top $h$ levels of $\bal(k)$, all components are $\bal(2^d)$ or $\bal(2^{d+1})$, except possibly one complete tree $E$ of size $s\leq 2^{d+1}$, where $d=\lfloor\log_2 k\rfloor-h$. Let $N$ and $M$ be the respective numbers of the two power-of-two components, and put $s=0$ if there is no exceptional component. Thus, as $s=O(k/\log^2 k)$,
    \[
        N2^d+M2^{d+1}=k-s=k-o(k).
    \]
    Every split probability in a tree of size at most $k$ is at least $k^{-2}$. There are fewer than $2^h$ splits in the removed levels and at most $s-1$ in $E$, so, writing $P_r=\P(\dtcs(r)=\bal(r))$, we obtain
    \[
        k^{-2(2^h+s)}P_{2^d}^{N}P_{2^{d+1}}^{M}
        \leq \P(\dtcs(k)=\bal(k))
        \leq P_{2^d}^{N}P_{2^{d+1}}^{M}.
    \]
    Here $2^h\log_2 k=o(k)$ and $s\log_2 k=o(k)$, while the power-of-two calculation gives
    \[
        P_{2^d}^{N}P_{2^{d+1}}^{M}
        =2^{-(\alpha+o(1))(N2^d+M2^{d+1})}
        =2^{-(\alpha+o(1))k}.
    \]
    The sandwiching proves the claimed exponent.
\end{proof}

Although \cite{glassey1976optimality} do not consider the cost maximization problem, their same perspective and method can be used. We identify the least likely shape as the caterpillar tree, which is the most imbalanced and is known to be extremal in various other settings as well. We use the name caterpillar in connection with its unordered counterpart defined later (see Definition~\ref{def:unlabel-cat}).

\begin{lem}[Least likely ordered trees]\label{lem:min ordered}
    Out of all ordered binary trees of size $k$, the trees occurring with minimal probability are orderings of $\cat(k)$, with 
    \[
    \P(\dtcs(k) = \cat(k)) = k2^{-k+1}\prod_{i=2}^k \frac{1}{h_{i-1}} = \exp(-(1+o(1))k\log\log k),\quad k\to\infty.
    \]
\end{lem}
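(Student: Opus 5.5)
We must show that the caterpillar maximizes $G(T)=\sum_{v\in I(T)} g(s_v)$ with $g(x)=\log h_{x-1}$, and then compute its probability.

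\textbf{Extremality.} The cleanest route seems to be an exchange argument, proving the claim by strong induction on $k$. Write the root split of an optimal tree as $(i,k-i)$. By induction both subtrees may be taken to be caterpillars, so
\[
G(T)=g(k)+C(i)+C(k-i),\qquad C(m)\ceq \sum_{j=2}^m g(j),
\]
where $C(m)=G(\cat(m))$. It then suffices to show that $F(i)\ceq C(i)+C(k-i)$ over $1\le i\le k-1$ is maximized only at $i\in\{1,k-1\}$. This follows because $F$ is symmetric and its increments satisfy
\[
F(i+1)-F(i)=g(i+1)-g(k-i).
\]
These increments are negative for $i+1<k-i$ and positive for $i+1>k-i$, since $g$ is strictly increasing for $x\ge2$ (as $h_{x-1}$ strictly increases) and we set $g(1)=0=g(2)$. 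Note that $g(2)=\log h_1=0$, so we must take care with ties at small sizes. Hence $F$ decreases and then increases, and its maxima lie at the endpoints. Strict monotonicity away from the ends, applied at every internal vertex, then shows that every maximizer has all splits of size $1$, i.e.\ is an ordering of $\cat(k)$.

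The main obstacle I expect is this uniqueness bookkeeping. We must verify that the induction gives \emph{only} caterpillar orderings, including the case $k-i=2$, where the increment $g(i+1)-g(k-i)$ needs $i+1>2$ to be strict. If the inductive framing proves awkward, I would instead imitate Glassey--Karp: pick a deepest internal vertex and argue via concavity of $g$ that moving a leaf toward the spine increases $G$.

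\textbf{Probability.} In $\cat(k)$ the internal vertices have sizes $m=2,\dots,k$, each with split $(1,m-1)$ or $(m-1,1)$. Using \eqref{eq:qmi}, each has probability
\[
q_m(1)=\frac{m}{2h_{m-1}(m-1)}.
\]
The product telescopes:
\[
\prod_{m=2}^k \frac{m}{m-1}=k,
\]
which yields $k2^{-(k-1)}\prod_{i=2}^k h_{i-1}^{-1}$, as claimed.

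\textbf{Asymptotics.} Since $h_{i-1}=\log i+O(1)$, we have
\[
\sum_{i=2}^k \log h_{i-1}=\sum_{i\le k}\log\log i+O(k)=k\log\log k+O(k).
\]
This holds because $\log\log i\ge \log\log k-O(1)$ for $i\ge k/e$, and the remaining terms contribute $O(k)$ after comparison with an integral. The factors $k2^{-k+1}$ contribute only $O(k)$, which is $o(k\log\log k)$. This gives the claimed $\exp(-(1+o(1))k\log\log k)$.
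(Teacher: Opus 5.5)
Your argument is correct, but it proves extremality by a different route from the paper. The paper argues globally via majorization. It shows that the sorted vector of internal-vertex sizes of $\cat(k)$, namely $(2,3,\dots,k)$, dominates that of every tree coordinatewise: $s_{(m-1)}(T)\le m$, obtained by taking the smallest internal vertex of size at least $m$ and counting it together with its descendants. It then invokes the Marshall--Olkin/Glassey--Karp criterion.

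You instead solve the recursion \eqref{eq:amin-def} directly by strong induction. You use optimal substructure and the increment identity $F(i+1)-F(i)=g(i+1)-g(k-i)=\log(h_i/h_{k-i-1})$ to show $F$ is valley-shaped. This is the same ratio-monotonicity device the paper uses for Lemma~\ref{lem:min unordered}.

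What each route buys:
\begin{itemize}
\item Yours is self-contained and needs only strict monotonicity of $g$ on $\{2,3,\dots\}$.
\item Yours also yields the uniqueness claim (only orderings of $\cat(k)$ are minimizers) inside the induction, whereas the paper leaves uniqueness implicit.
\item The paper's route gives a $g$-free structural statement about size vectors, mirroring Glassey--Karp's treatment of the Huffman side.
\end{itemize}

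The uniqueness issue you flagged is not an obstacle. For $1\le i\le k-2$ both $i+1$ and $k-i$ are at least $2$, so $g(1)$ never enters and every sign is strict, including when $k-i=2$. Since $F(2)-F(1)=g(2)-g(k-1)<0$ for $k\ge 4$, the inductive hypothesis then forces the large subtree to be a caterpillar ordering.

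One wording fix is needed in the asymptotics. Splitting at $i\ge k/e$ as written only yields a lower bound of $(1-1/e)k\log\log k$, because the terms with $i<k/e$ themselves sum to order $k\log\log k$. What is $O(k)$ is their \emph{deficit} from $\log\log k$; indeed $\int_2^k\log\log x\,dx=k\log\log k-\mathrm{li}(k)$. More simply, split at $i\ge k/\log k$, where $\log h_{i-1}\ge\log\log k-o(1)$, and use $\log h_{i-1}\ge 0$ for the remaining terms.
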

\begin{proof} 
To show that \eqref{eq:tree-cost} is maximized for $\cat(k)$, we use a majorization theory fact (see \cite[I.3.C.1]{marshall2011inequalities}, also used/reproven in \cite[Lemma 2]{glassey1976optimality}) that it suffices to show that the vector $(s_v)_{v \in I(\cat(k))}$ is weakly supermajorized by $(s_v)_{v \in I(T)}$ for any other binary tree $T$. That is, letting $s_{(1)} \leq \cdots \leq s_{(k-1)}$ denote the components of $(s_v)$ in increasing order, it suffices to show that 
\begin{equation}\label{eq:cat-tree-fact}
        \sum_{1 \leq i \leq \ell} s_{(i)}(\cat(k)) \geq \sum_{1 \leq i \leq \ell} s_{(i)}(T)
\end{equation}
for all $1 \leq \ell \leq k-1$. In fact, we show that $s_{(i)}(\cat(k)) \geq s_{(i)}(T)$ for every $1 \leq i \leq k-1$, so \eqref{eq:cat-tree-fact} comes from a stronger coordinate-wise domination. For any $T$ and any $m \geq 2$, let $v$ be the internal vertex of minimal size satisfying $s_v \geq m$. It must have $s_v-2$ descendants of size at most $m-1$. If $s_v > m$ this is already at least $m-1$ vertices, and if $s_v = m$ we have at least $m-1$ vertices of size at most $m$ by counting $v$ and its descendants. Thus, $s_{(m-1)}(T) \leq m = s_{(m-1)}(\cat(k))$.
\end{proof}

\subsection{Unordered subtrees}
Let $p_{\min}^u(k)$  denote the probability of the least likely unordered binary tree of size $k$. This can be computed recursively as 
\[
p_{\min}^u(k) = \min_{1 \leq i \leq k-1} 2^{\ind{i \neq k/2}} q_k(i) p_{\min}^u(i) p_{\min}^u(k-i) = \min_{1 \leq i\leq k-1} \frac{k}{2^{\ind{i=k/2}}h_{k-1}}\frac{p_{\min}^u(i)}{i} \frac{p_{\min}^u(k-i)}{k-i}.
\]
Letting $a_{\min}^u (k) \ceq p_{\min}^u(k)/k$,
we can work with 
\begin{equation}\label{eq:recursion-unlabel}
    a_{\min}^u(k) =  \min_{1 \leq i \leq k-1} \frac{1}{2^{\ind{i=k/2}}h_{k-1}} a_{\min}^u(i)a_{\min}^u(k-i).
\end{equation}
This additional term of $2^{\ind{i=k/2}}$ capturing the orderings complicates the situation greatly. Notice that the least likely ordered tree has many uneven splits, and thus has many ($2^{k-2}$) orderings; whereas the most likely ordered tree is quite balanced, and thus has few orderings. This suggests that the ordering may have a significant impact on the distribution of trees. Heuristically, $p_{\min}(k)$ is small enough compared to the number of orderings to not be affected much; whereas $p_{\max}(k)$ is not large enough to overcome the lack of orderings. Consequently, we are able to describe the least likely unordered tree, but the structure of the most likely unordered tree appears to be erratic. Indeed, simulations show that it resembles $\bal(k)$, but with many even splits replaced by splits of small difference.

While we cannot describe the most likely unordered shape, we can give exponential upper and lower bounds on its probability.
\begin{lem}[Most likely unordered probability]\label{lem:max unordered}
    For $\alpha = 1 + \sum_{i=1}^\infty \frac{\log_2 h_{2^i-1}}{2^i} = 1.637...$, 
    \[ 2^{-(\alpha + o(1))k} \leq \max_T \P(\dtcs(k) \cong T) \leq 2^{-(\alpha-1+o(1))k}. \]
\end{lem}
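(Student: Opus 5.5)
The lower bound is immediate from Lemma~\ref{lem:max}. For any ordered tree $T$, the event $\{\dtcs(k) = T\}$ is contained in $\{\dtcs(k) \cong T\}$. Taking $T = \bal(k)$ therefore gives
\[
\max_T \P(\dtcs(k)\cong T) \ge \P(\dtcs(k)=\bal(k)) = 2^{-(\alpha+o(1))k}.
\]

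For the upper bound, fix an unordered shape $T$ of size $k$. The event $\{\dtcs(k)\cong T\}$ is the disjoint union of the events $\{\dtcs(k)=T'\}$ over the distinct orderings $T'$ of $T$. Each of these has probability at most $p_{\max}(k) = \P(\dtcs(k)=\bal(k))$ by Lemma~\ref{lem:max}. The number of distinct orderings of $T$ is at most $2^{|I(T)|} = 2^{k-1}$, since each of the $k-1$ internal vertices has two choices of left/right order, and symmetric vertices only reduce the count. Hence
\[
\P(\dtcs(k)\cong T) \le 2^{k-1}\, 2^{-(\alpha+o(1))k} = 2^{-(\alpha-1+o(1))k}.
\]
Here the $o(1)$ term is uniform in $T$, because it comes only from the asymptotics of $p_{\max}(k)$, which does not depend on $T$. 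Taking the maximum over $T$ gives the claimed upper bound.

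No step here is a real obstacle; the argument is a union bound plus a crude count of orderings. The only point needing care is to count \emph{distinct} ordered trees rather than flip patterns, so that the union is disjoint. Overcounting by $2^{k-1}$ is harmless, since it only enlarges the bound.
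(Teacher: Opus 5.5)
Your proposal is correct and is essentially the paper's proof. The lower bound comes from $\P(\dtcs(k)=\bal(k))\le\P(\dtcs(k)\cong\bal(k))$ with Lemma~\ref{lem:max}, and the upper bound from multiplying $p_{\max}(k)$ by at most $2^{k-1}$ orderings. The only cosmetic difference is that you bound each ordering by $p_{\max}(k)$ directly, whereas the paper writes $\P(\dtcs(k)\cong T)\le\P(\dtcs(k)=T)\cdot\mathrm{Ord}(T)$ (all orderings are equally likely since $q_m(i)=q_m(m-i)$) and uses $\mathrm{Ord}(T)\le 2^k$.
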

\begin{proof}
    The bounds follow from
    \[ \P(\dtcs(k) = T) \leq \P(\dtcs(k) \cong T) \leq \P(\dtcs(k)=T) \cdot \mathrm{Ord}(T) \]
    combined with Lemma~\ref{lem:max} and $\mathrm{Ord}(T) \leq 2^k$ where $\mathrm{Ord}(T) = {2^{k-1}}/{|\mathrm{Aut}(T)|}$ is the number of orderings of $T$.
\end{proof}
Note that the same argument shows that the least likely unordered probability is $\exp(-(1+o(1))k\log\log k)$, which suffices to prove the main results. However, with a bit more work we can describe the shape that achieves this minimum. The shape resembles $\cat(k)$ from the ordered case, but has a small deviation due to the ordering factor for small $k$.

\begin{defn}\label{def:unlabel-cat}
    For $k \leq 16$, define $\cat^u(k)$ to be the trees in Figure~\ref{fig:mini cat}. 
    For $k \geq 17$, define $\cat^u(k)$ to be the binary tree of size $k$ where the first $k-16$ splits are of size 1 and the last 16 leaves form a $\cat^u(16)$ = $\bal(16)$, the complete binary tree of size 16. See Figure~\ref{fig:cat} for a diagram. 
\end{defn}
\begin{figure}[ht]
    \centering
    \begin{tikzpicture}[x=0.75pt,y=0.75pt,yscale=-1,xscale=1]

\draw    (80,50) -- (90,70) ;
\draw [shift={(90,70)}, rotate = 63.43] [color={rgb, 255:red, 0; green, 0; blue, 0 }  ][fill={rgb, 255:red, 0; green, 0; blue, 0 }  ][line width=0.75]      (0, 0) circle [x radius= 2.01, y radius= 2.01]   ;
\draw    (80,50) -- (70,70) ;
\draw [shift={(70,70)}, rotate = 116.57] [color={rgb, 255:red, 0; green, 0; blue, 0 }  ][fill={rgb, 255:red, 0; green, 0; blue, 0 }  ][line width=0.75]      (0, 0) circle [x radius= 2.01, y radius= 2.01]   ;
\draw    (130,50) -- (140,70) ;
\draw [shift={(140,70)}, rotate = 63.43] [color={rgb, 255:red, 0; green, 0; blue, 0 }  ][fill={rgb, 255:red, 0; green, 0; blue, 0 }  ][line width=0.75]      (0, 0) circle [x radius= 2.01, y radius= 2.01]   ;
\draw    (140,30) -- (120,70) ;
\draw [shift={(120,70)}, rotate = 116.57] [color={rgb, 255:red, 0; green, 0; blue, 0 }  ][fill={rgb, 255:red, 0; green, 0; blue, 0 }  ][line width=0.75]      (0, 0) circle [x radius= 2.01, y radius= 2.01]   ;
\draw    (140,30) -- (160,70) ;
\draw [shift={(160,70)}, rotate = 63.43] [color={rgb, 255:red, 0; green, 0; blue, 0 }  ][fill={rgb, 255:red, 0; green, 0; blue, 0 }  ][line width=0.75]      (0, 0) circle [x radius= 2.01, y radius= 2.01]   ;
\draw    (220,30) -- (240,50) ;
\draw    (220,30) -- (200,50) ;
\draw    (240,50) -- (250,70) ;
\draw [shift={(250,70)}, rotate = 63.43] [color={rgb, 255:red, 0; green, 0; blue, 0 }  ][fill={rgb, 255:red, 0; green, 0; blue, 0 }  ][line width=0.75]      (0, 0) circle [x radius= 2.01, y radius= 2.01]   ;
\draw    (240,50) -- (230,70) ;
\draw [shift={(230,70)}, rotate = 116.57] [color={rgb, 255:red, 0; green, 0; blue, 0 }  ][fill={rgb, 255:red, 0; green, 0; blue, 0 }  ][line width=0.75]      (0, 0) circle [x radius= 2.01, y radius= 2.01]   ;
\draw    (200,50) -- (210,70) ;
\draw [shift={(210,70)}, rotate = 63.43] [color={rgb, 255:red, 0; green, 0; blue, 0 }  ][fill={rgb, 255:red, 0; green, 0; blue, 0 }  ][line width=0.75]      (0, 0) circle [x radius= 2.01, y radius= 2.01]   ;
\draw    (200,50) -- (190,70) ;
\draw [shift={(190,70)}, rotate = 116.57] [color={rgb, 255:red, 0; green, 0; blue, 0 }  ][fill={rgb, 255:red, 0; green, 0; blue, 0 }  ][line width=0.75]      (0, 0) circle [x radius= 2.01, y radius= 2.01]   ;
\draw    (320,30) -- (310,40) ;
\draw  [line width=0.75]  (310,40) -- (340,70) -- (280,70) -- cycle ;
\draw    (320,30) -- (360,70) ;
\draw [shift={(360,70)}, rotate = 45] [color={rgb, 255:red, 0; green, 0; blue, 0 }  ][fill={rgb, 255:red, 0; green, 0; blue, 0 }  ][line width=0.75]      (0, 0) circle [x radius= 2.01, y radius= 2.01]   ;
\draw    (340,10) -- (400,70) ;
\draw [shift={(400,70)}, rotate = 45] [color={rgb, 255:red, 0; green, 0; blue, 0 }  ][fill={rgb, 255:red, 0; green, 0; blue, 0 }  ][line width=0.75]      (0, 0) circle [x radius= 2.01, y radius= 2.01]   ;
\draw    (340,10) -- (335,15) ;
\draw    (325,25) -- (320,30) ;
\draw    (90,130) -- (130,150) ;
\draw    (130,150) -- (150,170) ;
\draw    (130,150) -- (110,170) ;
\draw    (90,130) -- (50,150) ;
\draw    (150,170) -- (160,190) ;
\draw [shift={(160,190)}, rotate = 63.43] [color={rgb, 255:red, 0; green, 0; blue, 0 }  ][fill={rgb, 255:red, 0; green, 0; blue, 0 }  ][line width=0.75]      (0, 0) circle [x radius= 2.01, y radius= 2.01]   ;
\draw    (150,170) -- (140,190) ;
\draw [shift={(140,190)}, rotate = 116.57] [color={rgb, 255:red, 0; green, 0; blue, 0 }  ][fill={rgb, 255:red, 0; green, 0; blue, 0 }  ][line width=0.75]      (0, 0) circle [x radius= 2.01, y radius= 2.01]   ;
\draw    (110,170) -- (120,190) ;
\draw [shift={(120,190)}, rotate = 63.43] [color={rgb, 255:red, 0; green, 0; blue, 0 }  ][fill={rgb, 255:red, 0; green, 0; blue, 0 }  ][line width=0.75]      (0, 0) circle [x radius= 2.01, y radius= 2.01]   ;
\draw    (110,170) -- (100,190) ;
\draw [shift={(100,190)}, rotate = 116.57] [color={rgb, 255:red, 0; green, 0; blue, 0 }  ][fill={rgb, 255:red, 0; green, 0; blue, 0 }  ][line width=0.75]      (0, 0) circle [x radius= 2.01, y radius= 2.01]   ;
\draw    (50,150) -- (70,170) ;
\draw    (50,150) -- (30,170) ;
\draw    (70,170) -- (80,190) ;
\draw [shift={(80,190)}, rotate = 63.43] [color={rgb, 255:red, 0; green, 0; blue, 0 }  ][fill={rgb, 255:red, 0; green, 0; blue, 0 }  ][line width=0.75]      (0, 0) circle [x radius= 2.01, y radius= 2.01]   ;
\draw    (70,170) -- (60,190) ;
\draw [shift={(60,190)}, rotate = 116.57] [color={rgb, 255:red, 0; green, 0; blue, 0 }  ][fill={rgb, 255:red, 0; green, 0; blue, 0 }  ][line width=0.75]      (0, 0) circle [x radius= 2.01, y radius= 2.01]   ;
\draw    (30,170) -- (40,190) ;
\draw [shift={(40,190)}, rotate = 63.43] [color={rgb, 255:red, 0; green, 0; blue, 0 }  ][fill={rgb, 255:red, 0; green, 0; blue, 0 }  ][line width=0.75]      (0, 0) circle [x radius= 2.01, y radius= 2.01]   ;
\draw    (30,170) -- (20,190) ;
\draw [shift={(20,190)}, rotate = 116.57] [color={rgb, 255:red, 0; green, 0; blue, 0 }  ][fill={rgb, 255:red, 0; green, 0; blue, 0 }  ][line width=0.75]      (0, 0) circle [x radius= 2.01, y radius= 2.01]   ;
\draw    (240,140) -- (230,150) ;
\draw  [line width=0.75]  (230,150) -- (270,190) -- (190,190) -- cycle ;
\draw    (240,140) -- (290,190) ;
\draw [shift={(290,190)}, rotate = 45] [color={rgb, 255:red, 0; green, 0; blue, 0 }  ][fill={rgb, 255:red, 0; green, 0; blue, 0 }  ][line width=0.75]      (0, 0) circle [x radius= 2.01, y radius= 2.01]   ;
\draw    (260,120) -- (330,190) ;
\draw [shift={(330,190)}, rotate = 45] [color={rgb, 255:red, 0; green, 0; blue, 0 }  ][fill={rgb, 255:red, 0; green, 0; blue, 0 }  ][line width=0.75]      (0, 0) circle [x radius= 2.01, y radius= 2.01]   ;
\draw    (260,120) -- (255,125) ;
\draw    (245,135) -- (240,140) ;
\draw  [line width=0.75]  (410,140) -- (460,190) -- (360,190) -- cycle ;

\draw (80.5,89.5) node  [font=\small] [align=left] {$\displaystyle k=2$};
\draw (140.5,90.5) node  [font=\small] [align=left] {$\displaystyle k=3$};
\draw (220.5,90.5) node  [font=\small] [align=left] {$\displaystyle k=4:\mathrm{CBT}( 4)$};
\draw (290,56.4) node [anchor=north west][inner sep=0.75pt]  [font=\scriptsize,color={rgb, 255:red, 0; green, 0; blue, 0 }  ,opacity=1 ]  {$\mathrm{CBT}( 4)$};
\draw (314,90.5) node  [font=\small] [align=left] {$\displaystyle k=5,6,7$};
\draw (380,70.5) node  [font=\footnotesize] [align=left] {$\displaystyle \dotsc $};
\draw (330.2,19.2) node  [font=\tiny,rotate=-315] [align=left] {$\displaystyle ...$};
\draw (90.5,210.5) node  [font=\small] [align=left] {$\displaystyle k=8:\mathrm{CBT}( 8)$};
\draw (235.5,210.5) node  [font=\small] [align=left] {$\displaystyle k=9,10,\dotsc ,15$};
\draw (208,173.4) node [anchor=north west][inner sep=0.75pt]  [font=\footnotesize,color={rgb, 255:red, 0; green, 0; blue, 0 }  ,opacity=1 ]  {$\mathrm{CBT}( 8)$};
\draw (310,190.5) node  [font=\footnotesize] [align=left] {$\displaystyle \dotsc $};
\draw (250.2,129.2) node  [font=\tiny,rotate=-315] [align=left] {$\displaystyle ...$};
\draw (355,72) node [anchor=north west][inner sep=0.75pt]  [color={rgb, 255:red, 150; green, 150; blue, 150 }  ,opacity=1 ] [align=left] {$\displaystyle \underbrace{\ \ \ \ \ \ \ \ \ \ }_{k-4}$};
\draw (285,191.2) node [anchor=north west][inner sep=0.75pt]  [color={rgb, 255:red, 150; green, 150; blue, 150 }  ,opacity=1 ] [align=left] {$\displaystyle \underbrace{\ \ \ \ \ \ \ \ \ \ }_{k-8}$};
\draw (384,168.4) node [anchor=north west][inner sep=0.75pt]  [font=\footnotesize,color={rgb, 255:red, 0; green, 0; blue, 0 }  ,opacity=1 ]  {$\mathrm{CBT}( 16)$};
\draw (410.5,210) node  [font=\small] [align=left] {$\displaystyle k=16$};

\end{tikzpicture}
    \caption{Unordered caterpillar trees of size $k=2, \ldots, 16$.}
    \label{fig:mini cat}
\end{figure}

\begin{figure}[ht]
  \centering
  \begin{tikzpicture}[x=0.75pt,y=0.75pt,yscale=-1,xscale=1]

\draw    (90,40) -- (80,50) ;
\draw    (90,40) -- (150,100) ;
\draw [shift={(150,100)}, rotate = 45] [color={rgb, 255:red, 0; green, 0; blue, 0 }  ][fill={rgb, 255:red, 0; green, 0; blue, 0 }  ][line width=0.75]      (0, 0) circle [x radius= 2.01, y radius= 2.01]   ;
\draw    (110,20) -- (190,100) ;
\draw [shift={(190,100)}, rotate = 45] [color={rgb, 255:red, 0; green, 0; blue, 0 }  ][fill={rgb, 255:red, 0; green, 0; blue, 0 }  ][line width=0.75]      (0, 0) circle [x radius= 2.01, y radius= 2.01]   ;
\draw    (110,20) -- (105,25) ;
\draw    (95,35) -- (90,40) ;
\draw  [line width=0.75]  (80,50) -- (130,100) -- (30,100) -- cycle ;

\draw (170,100.5) node  [font=\footnotesize] [align=left] {$\displaystyle \dotsc $};
\draw (100.2,29.2) node  [font=\tiny,rotate=-315] [align=left] {$\displaystyle ...$};
\draw (144,102) node [anchor=north west][inner sep=0.75pt]  [color={rgb, 255:red, 150; green, 150; blue, 150 }  ,opacity=1 ] [align=left] {$\displaystyle \underbrace{\ \ \ \ \ \ \ \ \ \ }_{k-16}$};
\draw (55,78.4) node [anchor=north west][inner sep=0.75pt]  [font=\footnotesize,color={rgb, 255:red, 0; green, 0; blue, 0 }  ,opacity=1 ]  {$\mathrm{CBT}( 16)$};

\end{tikzpicture}
  \vspace{-.25\bs}
  \caption{Unordered caterpillar tree of size $k \geq 17$: a length $k-16$ spine with a hanging $\bal(16)$.}
  \label{fig:cat}
\end{figure}

\begin{lem}[Least likely unordered tree]\label{lem:min unordered}
    Out of all unordered binary trees of size $k$, the tree with the minimum probability is $\cat^u(k)$. Moreover, for $k \geq 17$ we have 
    \[ \P(\dtcs(k) \cong \cat^u(k)) = \frac{kp_{\min}^u(16)}{16}\cdot \prod_{i=17}^k \frac{1}{h_{i-1}} = \exp(-(1+o(1))k\log\log k) {,\quad k\to\infty}. \]
\end{lem}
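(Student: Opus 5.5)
The plan is to analyse the recursion \eqref{eq:recursion-unlabel} directly, in two phases: a finite computation for $k\le 16$ and an induction for $k\ge 17$. Write $b(k)\ceq -\log a^u_{\min}(k)$, so that
\[
b(k)=\log h_{k-1}+\max_{1\le i\le k-1}\bigl(b(i)+b(k-i)+\ind{i=k/2}\log 2\bigr),
\]
and $\cat^u(k)$ should be the maximizer. For $k\le 16$ I would simply evaluate the recursion exactly, for instance by a short table of the values $b(1),\dots,b(16)$ with $b(1)=0$. I would then check that the maximizing split is the one drawn in Figure~\ref{fig:mini cat}: the even split $8+8$ at $k=16$, $4+4$ at $k=8$, $2+2$ at $k=4$, and a $1$-split otherwise.

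For $k\ge 17$ I would prove by strong induction that the maximum is attained uniquely at $i=1$ (or $k-1$). That is, I would show
\[
b(k-1)\ \ge\ b(i)+b(k-i)+\ind{i=k/2}\log 2 \qquad\text{for all } 2\le i\le k-2,
\]
which is a superadditivity-type inequality. I would introduce the increments $\Delta(m)\ceq b(m)-b(m-1)$. By the induction hypothesis, $\Delta(m)=\log h_{m-1}$ for $m\ge 18$, and the small values are known from the table. Assume without loss of generality that $i\le k-i$. Then
\[
b(k-1)-b(k-i)=\sum_{m=k-i+1}^{k-1}\Delta(m),
\]
a sum of $i-1$ increments, each at least $\log h_{k-i}$. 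On the other side, $b(i)=\sum_{m=2}^{i}\Delta(m)$ is also a sum of $i-1$ increments, but with smaller indices. So the task reduces to a termwise comparison of $\Delta(k-i+j)$ against $\Delta(j+1)$, plus absorbing the extra $\log 2$ when $i=k/2$.

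For large indices this comparison is easy: $\Delta(m)=\log h_{m-1}$ is increasing in $m$, and $k-i+j>j+1$. The delicate part is that $\Delta$ is not monotone for small $m$. The jumps at $m=4,8,16$ carry the extra $\log 2$, so $\Delta(16)$ could exceed $\log h_{16}$, and a termwise comparison may fail near those indices. I expect this to be the main obstacle. My first attempt would be to compare block sums instead of individual terms: group indices into dyadic blocks and use the table to verify
\[
\sum_{m\le j}\Delta(m)\ \le\ \sum_{m\le j}\Delta(m+k-i)\qquad\text{for every } j,
\]
so that the comparison holds on partial sums. When $i=k/2\ge 9$, the gap $\sum_{j}\bigl(\log h_{k/2+j-1}-\Delta(j+1)\bigr)$ is at least $\log(h_{k/2}/h_{1})\ge\log 2$, which absorbs the extra factor. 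Any finitely many boundary cases, namely $k$ from $17$ up to about $32$, I would check numerically from the recursion.

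The closed formula then follows by unrolling. Each of the $k-16$ spine splits of sizes $m=17,\dots,k$ is a non-even split. It contributes the factor $2q_m(1)=\frac{m}{(m-1)h_{m-1}}$ to the probability, where the $2$ accounts for the two unordered placements. Multiplying these gives a telescoping product:
\[
p^u_{\min}(k)=\frac{k}{16}\,p^u_{\min}(16)\prod_{i=17}^k\frac1{h_{i-1}}.
\]
For the asymptotics, $\sum_{i\le k}\log h_{i-1}=\sum_{i\le k}\log\log i+O(k)=(1+o(1))k\log\log k$, which yields $\exp(-(1+o(1))k\log\log k)$.
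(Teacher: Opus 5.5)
Your overall plan matches the paper's: an exact table for small $k$, then an induction in which $b(m)-b(16)=\sum_{n=16}^{m-1}\log h_n$ for $m\ge 16$. This turns every comparison into a sum of $\log h$'s that is monotone in $k$, so it reduces to a finite check. The paper organizes this as three ratios, for $2\le j\le 16$, $17\le j<k/2$ and $j=k/2$, each increasing in $k$ and verified by exact arithmetic for $k\le 35$. Your tail comparison corresponds to its reduction of $17\le j<k/2$ to $j=17$, and your head block to its case $2\le j\le 16$.

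\textbf{The error.} The one step where you commit to a concrete argument, absorbing the extra $\log 2$ when $i=k/2$, is wrong as stated.
\begin{itemize}
\item The first term of the gap is $\log h_{k/2}-\Delta(2)=\log(h_{k/2}/2)$, not $\log(h_{k/2}/h_1)$. This is because $b(2)=\log 2$: the split $2=1+1$ is itself an even split.
\item More seriously, the remaining terms are not nonnegative. We have $\Delta(4)=\log(44/9)\approx 1.587$ and $\Delta(8)\approx 1.876$, while $\log h_{m-1}<1.876$ for all $m$ below roughly $380$. So the termwise differences paired with $\Delta(4)$ and $\Delta(8)$ are negative for all even $k$ up to several hundred.
\end{itemize}
Hence the gap cannot be bounded below by its first term. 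This is exactly the non-monotonicity of $\Delta$ that you flagged earlier.

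\textbf{The fix.} Use your own head/tail split, stated precisely.
\begin{itemize}
\item The partial-sum inequalities ``for every $j$'' add nothing. For $j<i-1$ they read $b(j+1)+b(k-i)\le b(k-i+j)$, which is the induction hypothesis. Only the full sum matters.
\item For $k\ge 33$ one has $k-i\ge 17$ for every $i\le k/2$. Put $i'=\min(i,16)$.
\item The last $i-i'$ increments on each side are all $\log h$'s and compare termwise.
\item The first $i'-1$ increments on the right sum to $\sum_{n=k-i}^{k-i+i'-2}\log h_n$. This is increasing in $k-i$, so it suffices to check the fifteen inequalities $b(i')<\sum_{n=17}^{15+i'}\log h_n$ for $2\le i'\le 16$. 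All of them hold with room.
\item For $i'=16$ the margin is about $19.85-15.93\approx 3.9>\log 2$. In this regime an even split has $i=k/2\ge 17$, hence $i'=16$, so this margin absorbs the factor $2$.
\end{itemize}
Together with exact checks for $17\le k\le 32$, this completes the induction.

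\textbf{Minor points.} $\Delta(m)=\log h_{m-1}$ already holds from $m=17$, not only from $m=18$. The shift in your partial-sum display should be $k-i-1$, not $k-i$. The telescoping formula and the asymptotics are fine.
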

\begin{proof} 
We aim to show that $a_{\min}^u(k) = a_{\min}^u(16)\cdot\prod_{i=17}^k 1/h_{i-1}$ and that, for $1\leq k\leq16$, the values $a_{\min}^u(k)$ are attained by the trees in Definition~\ref{def:unlabel-cat}. Finite verification for $k \le 35$ can be performed with exact rational arithmetic: starting from $a_{\min}^u(1)=1$, for each $2\leq k\leq35$ we evaluate every entry on the right-hand side of \eqref{eq:recursion-unlabel}. The minimizing smaller root split is $k/2$ for $k\in\{4,8,16\}$ and is $1$ for every other $k\leq35$. This supplies the base case for the induction below.

Take $k \geq {36}$ and assume we have shown the desired conclusion for $j \leq k-1$. By \eqref{eq:recursion-unlabel} and symmetry, we would like to show that the minimizer of
\[ \frac{1}{2^{\ind{{j}=k/2}}h_{k-1}} a_{\min}^u(j)a_{\min}^u(k-j) \]
over $1 \leq j \leq k/2$ occurs at $j=1$. We separate into three cases: $2\leq j \leq 16$, $17 \leq j < k/2$, and $j = k/2$. 

First, if $17 \leq j < k/2$ then by the inductive hypothesis we have 
\begin{align*}
    a_{\min}^u(j)a_{\min}^u(k-j) = a_{\min}^u(16)\cdot\prod_{i=17}^j \frac{1}{h_{i-1}} \cdot a_{\min}^u(16)\cdot\prod_{i=17}^{k-j} \frac{1}{h_{i-1}}.
\end{align*}
For $17\leq j<(k-1)/2$, the ratio of the expression at $j+1$ to that at $j$ is
$h_{k-j-1}/h_j>1$. Hence the expression is increasing in $j$ and is minimized at $j=17$. Thus, it suffices to show that
\begin{align*}
    \frac{a_{\min}^u(17)a_{\min}^u(k-17)}{a_{\min}^u(1)a_{\min}^u(k-1)} &= \frac{a_{\min}^u(16)\cdot\prod_{i=17}^{17} \frac{1}{h_{i-1}} \cdot a_{\min}^u(16)\cdot\prod_{i=17}^{k-17} \frac{1}{h_{i-1}}}{a_{\min}^u(16) \cdot \prod_{i=17}^{k-1}\frac{1}{h_{i-1}}} \\ 
    &= a_{\min}^u(16) \cdot \frac{1}{h_{16}} \cdot \prod_{i=k-16}^{k-1} h_{i-1} > 1.
\end{align*}
Notice that the product on the right hand side is increasing in $k$. Since the inequality holds at $k=35$ by the base-case calculation, it holds for all larger $k$ as well. 

Next, if $j=k/2$ then we need to compare $a_{\min}^u(1)a_{\min}^u(k-1)$ with $a_{\min}^u(k/2)a_{\min}^u(k/2)/2$. By the inductive hypothesis, 
\begin{align*}
    \frac{a_{\min}^u(k/2)a_{\min}^u(k/2)/2}{a_{\min}^u(1)a_{\min}^u(k-1)} &= \frac{a_{\min}^u(16)\cdot\prod_{i=17}^{k/2} \frac{1}{h_{i-1}} \cdot a_{\min}^u(16)\cdot\prod_{i=17}^{k/2} \frac{1}{h_{i-1}}}{2a_{\min}^u(16)\cdot\prod_{i=17}^{k-1} \frac{1}{h_{i-1}}} \\
    &= \frac{a_{\min}^u(16)}{2} \cdot \frac{\prod_{i=k/2+1}^{k-1} h_{i-1}}{\prod_{i=17}^{k/2} h_{i-1}}.
\end{align*}
When we increment $k/2$ to $k/2+1$, the right hand side is multiplied by $h_{k-1}h_{k}/h_{k/2}^2 > 1$. Thus, {since the inequality holds at $k=34$ by the base-case calculation,} it holds for all larger even $k$ as well. 

Finally, we have the case of $2 \leq j \leq 16$. By the inductive hypothesis we have 
\begin{align*}
    \frac{a_{\min}^u(j)a_{\min}^u(k-j)}{a_{\min}^u(1)a_{\min}^u(k-1)} &= \frac{a_{\min}^u(j)\cdot a_{\min}^u(16)\cdot \prod_{i=17}^{k-j}\frac{1}{h_{i-1}}}{a_{\min}^u(16)\cdot \prod_{i=17}^{k-1}\frac{1}{h_{i-1}}} \\
    &= a_{\min}^u(j) \cdot \prod_{i=k-j+1}^{k-1} h_{i-1}.
\end{align*}
For each $2 \leq j \leq 16$, as $k$ increases the product on the right hand side increases. {By the base-case calculation, the right hand side is greater than 1 at $k=35$ for all such $j$,} so it holds for all larger $k$ as well. 
\end{proof}

\section{Proof of main results}
Throughout this section, $n$ is sufficiently large for the displayed ranges of $k$.
To begin, we need an estimate on how many subtrees of size $k$ exist in $\dtcs(n)$.
\begin{lem}
\label{lem:trials}
Let $2\leq k=n^{o(1)}$. With high probability, the number of subtrees of size $k$ in $\dtcs(n)$ is $\Theta(n\log k/k^2)$. Moreover, conditional on their locations, the shapes of these subtrees are independent and distributed as $\dtcs(k)$. There exists $C > 0$ such that for $2\leq k = n^{o(1)}$,
\[
    \P\left(\left|N_n(k)-\E{N_n(k)}\right|>\tfrac12\E{N_n(k)}\right)
    \leq C\frac{k^4}{n(\log k)^2}.
\]
\end{lem}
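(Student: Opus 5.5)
We compute the first two moments of $N_n(k)$, the number of clades of size $k$ in $\dtcs(n)$, and conclude with Chebyshev's inequality. The independence claim is handled separately via the recursive construction.

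\emph{Independence.} Condition on the sequence of interval sizes produced when splitting down to the intervals of size $k$; this sequence determines the locations of the size-$k$ clades. By construction, each interval of size $m$ is then split by fresh randomness according to $q_m(\cdot)$, independently of everything above it. Hence, given that a vertex has size exactly $k$, the subtree below it is a $\dtcs(k)$. Distinct size-$k$ vertices are disjoint intervals, so their subtrees use disjoint randomness and are independent. This is the strong Markov property of the recursive splitting.

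\emph{First moment.} Let $u_n(k)=\E{N_n(k)}$, the expected number of vertices of size $k$. It satisfies
\[
u_n(k)=\ind{n=k}+\sum_{i=1}^{n-1} q_n(i)\bigl(u_i(k)+u_{n-i}(k)\bigr),
\]
so $u_n(k)$ is the Green's function of the harmonic descent chain, in which a size-$m$ interval moves to size $i$ with probability $2q_m(i)\cdot\tfrac12$, weighted by multiplicity. I would invoke the known asymptotics from \cite{brandenberger2025asymptoticsharmonicdescentchain} and \cite{aldous2023critical}. The expected number of clades of size $k$ equals $n$ times the occupation probability of $k$ for a single leaf's lineage, divided by $k$ since each such clade contains $k$ leaves. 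Aldous--Janson show that a leaf's ancestor passes through size $k$ with probability of order $\log k/k$ uniformly for $k=n^{o(1)}$; more precisely $\approx \frac{6 h_{k-1}}{\pi^2 k}\cdot(1+o(1))$ up to boundary terms. This gives $u_n(k)=\Theta(n\log k/k^2)$. If a direct proof is needed, I would verify that the ansatz $u_n(k)\approx c\,n\,h_{k-1}/(k(k-1))$ for $n>k$ is an approximate solution of the recursion, and bound the error using the renewal structure of $\log$ sizes under the splitting law.

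\emph{Second moment.} This is the main obstacle. Write $N_n(k)^2$ as the sum over ordered pairs of size-$k$ vertices. Diagonal pairs contribute $u_n(k)$. A non-diagonal pair has disjoint intervals, so it has a most recent common ancestor of some size $m$ that splits into $(i,m-i)$ with one vertex on each side. Hence
\[
\E{N_n(k)(N_n(k)-1)}=\sum_{m}u_n(m)\sum_{i=1}^{m-1} q_m(i)\,u_i(k)\,u_{m-i}(k).
\]
Substituting $u_x(k)\approx c\,x\log k/k^2$ for $x\ge k$ gives
\[
\sum_i q_m(i)\,u_i u_{m-i}\approx \frac{c^2 \log^2 k}{k^4}\cdot\frac{m}{2h_{m-1}}\,\#\{i:\ i,\,m-i\ge k\}\approx \frac{c^2\log^2 k}{k^4}\cdot\frac{m^2}{2h_{m-1}}.
\]
Also $u_n(m)\approx c\,n\log m/m^2$. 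Summing over $m$, the product is $\approx \frac{c^3}{2}\,n\log^2 k/k^4$ per $m$, up to the factor $\log m/h_{m-1}\to 1$. Naively this gives a variance of order $n^2$, so this estimate must be refined: the leading terms must cancel against $u_n(k)^2$. The cleanest route is a direct variance recursion. Let $V_n=\va(N_n(k))$. Conditioning on the root split gives
\[
V_n=\sum_i q_n(i)\bigl(V_i+V_{n-i}\bigr)+\va\bigl(u_{L_n}(k)+u_{R_n}(k)\bigr).
\]
Since $u_x(k)=c\,x\log k/k^2+O(\text{error})$ is nearly linear for $x\ge k$, and $L_n+R_n=n$, the variance of the last term comes only from the nonlinear error and from the events $L_n<k$ or $R_n<k$. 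That event has probability $O(k/(n\log n))$ and causes a deficit of order $n\log k/k^2$. The forcing term is therefore roughly $O(n^2\log^2 k/k^4\cdot k/n)$ at the root, and summing the forcing with the Green's function $u$ should give $V_n=O(n\,\mathrm{poly}(k))$. The target is $V_n\le C n k^{0}$, or whatever yields the stated bound. Chebyshev's inequality then gives
\[
\P\left(|N_n(k)-u_n(k)|>\tfrac12 u_n(k)\right)\le \frac{4V_n}{u_n(k)^2}\le \frac{C k^4}{n\log^2 k},
\]
provided $V_n=O(n)$. I expect controlling the error in the linear approximation of $u_x(k)$, uniformly in $x\ge k$, to be the delicate step. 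I would first establish $|u_x(k)-c\,x\log k/k^2|=O(\log k/k)$ via the renewal theorem for the harmonic descent chain, which makes the forcing term summable to $O(n)$.
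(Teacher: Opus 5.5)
Your independence argument and your first-moment step match the paper. The paper likewise imports $\E{N_n(k)}=\Theta(\mu(k)n)$ for $k=n^{o(1)}$ from \cite[Lemma 4.1]{brandenberger2025asymptoticsharmonicdescentchain} and the value of $\mu(k)$ from \cite{aldous2025critical}. The gap is the variance. Via Chebyshev, with $\E{N_n(k)}^2\asymp n^2(\log k)^2/k^4$, the stated tail bound $Ck^4/(n(\log k)^2)$ amounts to $\va(N_n(k))\le Cn$ with $C$ \emph{independent of $k$}. Your sketch says it ``should give'' $V_n=O(n\,\mathrm{poly}(k))$, which does not yield the lemma, and you then simply assume $V_n=O(n)$. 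The paper gets the uniform linear bound from \cite[Lemma 4.2]{brandenberger2025asymptoticsharmonicdescentchain}, noting that the constant $\sigma(k)^2$ in its proof is universal. Your proposal has no substitute for that input.

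The heuristic you give for the forcing term is also wrong in ways that matter:
\begin{itemize}
\item The event $\{\min(L_n,R_n)<k\}$ has probability $\sum_{i<k}2q_n(i)\approx h_{k-1}/h_{n-1}$, which is of order $\log k/\log n$, not $O(k/(n\log n))$.
\item On that event the loss relative to the linear profile is only $\mu(k)\min(L_n,R_n)=O(\log k/k)$, not of order $n\log k/k^2$.
\item The real source of forcing is the nonlinearity of $x\mapsto u_x(k)$ just above $x=k$. There $u_k(k)=1$ and $u_{k+1}(k)=2q_{k+1}(1)=\frac{k+1}{kh_k}$, while $\mu(k)x=O(\log k/k)$.
\end{itemize}
So your proposed key estimate $|u_x(k)-c\,x\log k/k^2|=O(\log k/k)$, uniformly in $x\ge k$, is false at $x=k$. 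What would suffice is a bound $\sup_{x\ge1}|u_x(k)-\mu(k)x|\le C$ with $C$ independent of $k$. Since $\mu(k)(L_m+R_m)=\mu(k)m$ is deterministic, unrolling your variance recursion would then give
\[
V_n=\sum_{m>k}u_n(m)\,\va\bigl(u_{L_m}(k)+u_{R_m}(k)\bigr)\le 4C^2\sum_{m\ge1}u_n(m)=4C^2(2n-1),
\]
where $u_n(m)$ is the expected number of vertices of size $m$ in $\dtcs(n)$. Proving such an error bound, uniform in both $x$ and $k$, is a genuine quantitative renewal estimate for the harmonic descent chain. Your proposal leaves it unproved, and it is exactly the kind of control the paper imports from \cite{brandenberger2025asymptoticsharmonicdescentchain} rather than re-deriving.
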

\begin{proof}[Proof of Lemma~\ref{lem:trials}]
By \cite[Lemmas 4.1, 4.2]{brandenberger2025asymptoticsharmonicdescentchain}, for $k = n^{o(1)}$ there exist $\mu(k)$ and $\sigma(k)$ such that $\E{N_n(k)} = \Theta(\mu(k) n)$ and $\va(N_n(k)) \leq \sigma(k)^2n$. The explicit identification of the limit in \cite{aldous2025critical} yields $\mu(k) = \frac{6h_{k-1}}{\pi^2 (k-1)^2} = \Theta(\frac{\log k}{k^2})$ and when combined with the proof of Lemma 4.2 in \cite{brandenberger2025asymptoticsharmonicdescentchain} yields $\sigma(k)^2$ is at most a universal constant. The stated probability bound now follows from Chebyshev's inequality, and the independence follows directly from the recursive construction of the tree.
\end{proof}

We now combine Lemma~\ref{lem:trials} with our knowledge of the extremal shapes to deduce the main results.
\begin{proof}[Proof of Theorem~\ref{thm:min}]
Let $k_0=\log n/\log\log\log n$ and fix $\e>0$. By Lemma~\ref{lem:trials}, the probability that
\[
    N_n(k)\geq c\frac{n\log k}{k^2}
\]
fails for some $2\leq k\leq(1-\e)k_0$ is at most
\[
    \frac{C}{n}\sum_{k\leq k_0}\frac{k^4}{(\log k)^2}
    \leq\frac{Ck_0^5}{n}=o(1).
\]
Let $\mathcal E_n$ be the event that these bounds hold simultaneously, and let $\chi$ be an ordered shape of size $k\leq(1-\e)k_0$. Lemma~\ref{lem:min ordered} gives
\[
    \P(N_n(\chi)=0,\mathcal E_n)
    \leq\exp\left(-c\frac{n\log k}{k^2}p_{\min}(k)\right)
    \leq\exp(-n^{\e/2}),
\]
because, uniformly for $k\leq(1-\e)k_0$ tending to infinity,
\[
    \log\left(c\frac{n\log k}{k^2}p_{\min}(k)\right)
    =\log n-(1+o(1))k\log\log k\geq\frac\e2\log n.
\]
The finitely many smaller sizes have absence probability exponentially small in $n$. There are fewer than $4^k$ shapes of size $k$, so a union bound over shapes and over $2\leq k\leq(1-\e)k_0$ proves the lower bound.

For $k=\lceil(1+\e)k_0\rceil$, Lemmas~\ref{lem:trials} and~\ref{lem:min ordered} give
\[
    \log\E{N_n(\cat(k))}
    \leq\log n-(1+o(1))k\log\log k
    \leq-\frac\e2\log n.
\]
Markov's inequality proves the upper bound. The unordered statement follows in the same way from Lemma~\ref{lem:min unordered}; its probability has the same asymptotic form $\exp(-(1+o(1))k\log\log k)$.
\end{proof}

\begin{proof}[Proof of Theorem~\ref{thm:max}]
First consider ordered trees and set $k_0=\alpha^{-1}\log_2n$. Fix $\e>0$ and let $k=\lfloor(1-\e)k_0\rfloor$. By Lemma~\ref{lem:trials}, except on an event of probability
\[
    O\left(\frac{k^4}{n(\log k)^2}\right)=o(1),
\]
there are at least $M=\lfloor n/k^2\rfloor$ clades of size $k$. By Lemma~\ref{lem:max}, $p_{\max}(k)=n^{-1+\e+o(1)}$, so $Mp_{\max}(k)=n^{\e+o(1)}/k^2\to\infty$. Conditional on these $M$ clades, the probability of at most one copy of $\bal(k)$ is at most
\[
    (1-p_{\max}(k))^M+Mp_{\max}(k)(1-p_{\max}(k))^{M-1}
    \leq(1+Mp_{\max}(k))e^{-(M-1)p_{\max}(k)}
    \leq \exp(-n^{\e/2})
\]
for all sufficiently large $n$. This proves the lower bound.

Now let $k>(2+\e)k_0$. For $k\leq4k_0$, Lemma~\ref{lem:max} gives
$n^2p_{\max}(k)\leq n^{-\e/2}$ for all sufficiently large $n$. The probability that there are no repeats among the size-$k$ clades is at least
\[
    \prod_{j=1}^{n-1}(1-jp_{\max}(k))
    \geq\exp(-n^2p_{\max}(k))
    \geq1-n^{-\e/2},
\]
where the middle inequality uses $jp_{\max}(k)\leq1/2$ and
$\log(1-x)\geq-2x$ for $0\leq x\leq1/2$. A union bound over $(2+\e)k_0<k\leq4k_0$ is therefore $o(1)$. For $k>4k_0$, we have $n^2p_{\max}(k)\leq n^{-2+o(1)}$, so another union bound over $k<n$ proves the ordered upper bound.

For unordered trees, the lower-bound argument is unchanged because
$\P(\dtcs(k)\cong\bal(k))\geq\P(\dtcs(k)=\bal(k))$. For the upper bound, repeat the preceding argument with $k_0=(\alpha-1)^{-1}\log_2n$ and Lemma~\ref{lem:max unordered}.
\end{proof}

\begin{proof}[Proof of Theorem~\ref{thm:number}]
Let $L=\log_2n$. The probability that the lower bound in Lemma~\ref{lem:trials} fails for some $4L\leq k\leq5L$ is at most
\[
    \frac{C}{n}\sum_{k=\lceil4L\rceil}^{\lfloor5L\rfloor}
    \frac{k^4}{(\log k)^2}=o(1).
\]
Thus, with high probability, the number of clades in this range is at least
\[
    c\sum_{k=\lceil4L\rceil}^{\lfloor5L\rfloor}\frac{n\log k}{k^2}
    \geq c'\frac{n\log\log n}{\log n}.
\]
By Theorem~\ref{thm:max}, all of them are distinct as ordered and unordered subtrees with high probability. This proves the lower bound.

For the upper bound, we consider a few ranges of $k$. We use $c$ and $C$ as universal constants that may change from line to line.
\begin{itemize}
    \item $k\leq\frac{L}{3}$: The total number of distinct binary trees of size $k$ is at most $4^k$, so the contribution from this case is at most
    \[
        \sum_{k=2}^{{\lfloor L/3\rfloor}}4^k\leq4^{L/3+1}\leq4n^{2/3}.
    \]
    \item $\frac{L}{3}\leq k\leq L^{12}$: The probability that the upper bound in Lemma~\ref{lem:trials} fails for some $k$ in this range is at most
    \[
        \frac{C}{n}\sum_{k\leq L^{12}}\frac{k^4}{(\log k)^2}
        \leq\frac{CL^{60}}{n}=o(1).
    \]
    Outside this event, the number of subtrees in the range is at most
    \[
        C\sum_{k={\lceil L/3\rceil}}^{\lfloor L^{12} \rfloor}
        \frac{n\log k}{k^2}
        \leq C\frac{n\log\log n}{\log n}.
    \]
    \item ${L^{12}} \leq k\leq n-n^{1/6}$: Let $x_m=\E{N_m(k)}/m$ and $r=\lfloor k^{1/6}\rfloor$. The case $c=3$, $\ell=1$ of \cite[Theorem~1.1(iii)]{brandenberger2025asymptoticsharmonicdescentchain} gives
    \[
        x_{k+r}\leq\frac{3x_k}{rh_k}\leq6k^{-7/6}.
    \]
    Here $x_k=1/k$, $r\geq k^{1/6}/2$, and $h_k\geq1$. By part (i) of the same theorem, $x_m$ decreases in $m$. Since $n-k\geq n^{1/6}\geq r$, we have $x_n\leq6k^{-7/6}$. Hence, with $A=\lceil L^{12} \rceil$,
    \[
        \E{\sum_{k=A}^{\lfloor n-n^{1/6}\rfloor}N_n(k)}
        \leq6n\sum_{k\geq A}k^{-7/6}
        \leq50nA^{-1/6}
        \leq\frac{50n}{\log^2n}.
    \]
    By Markov's inequality, the probability of more than $n/\log n$ such subtrees is at most $50/\log n$.
    \item $n-n^{1/6}\leq k\leq n$: There is at most one subtree of each of these sizes, so the contribution is at most $n^{1/6} +1$.
\end{itemize}
Summing these cases yields the desired bound.
\end{proof}

\bibliographystyle{abbrv}
\bibliography{biblio}

@InProceedings{aldous1996probability,
author="Aldous, David",
editor="Aldous, David
and Pemantle, Robin",
title="Probability Distributions on Cladograms",
booktitle="Random Discrete Structures",
year="1996",
publisher="Springer",
address="New York, NY",
series="The IMA Volumes in Mathematics and its Applications",
volume="76",
pages="1--18",
isbn="978-1-4612-0719-1"
}

@article{aldous2023critical,
  title={The critical beta-splitting random tree {II}: Overview and open problems},
  author={Aldous, David and Janson, Svante},
  journal={arXiv preprint arXiv:2303.02529},
  year={2023}
}

@article{aldous2025critical,
  title={The critical beta-splitting random tree {I}: Heights and related results},
  author={Aldous, David and Pittel, Boris},
  journal={The Annals of Applied Probability},
  volume={35},
  number={1},
  pages={158--195},
  year={2025},
  publisher={Institute of Mathematical Statistics}
}

@article{aldous2024harmonic,
  title={The harmonic descent chain},
  author={Aldous, David and Janson, Svante and Li, Xiaodan},
  journal={Electronic Communications in Probability},
  volume={29},
  pages={1--10},
  year={2024},
  publisher={The Institute of Mathematical Statistics and the Bernoulli Society}
}

@article{iksanov2025harmonic,
  title={The harmonic descent chain and regenerative composition structures},
  author={Iksanov, Alexander},
  journal={Electronic Communications in Probability},
  volume={30},
  pages={1--3},
  year={2025},
  publisher={The Institute of Mathematical Statistics and the Bernoulli Society}
}

@article{aldous2024critical3,
  title={The critical beta-splitting random tree {III}: The exchangeable partition representation and the fringe tree},
  author={Aldous, David and Janson, Svante},
  journal={arXiv preprint arXiv:2412.09655},
  year={2024}
}

@article{aldous2024critical4,
    author = {David Aldous and Svante Janson},
    title = {{The critical beta-splitting random tree IV: Mellin analysis of leaf height}},
    volume = {30},
    journal = {Electronic Journal of Probability},
    publisher = {Institute of Mathematical Statistics and Bernoulli Society},
    pages = {1 -- 39},
    year = {2025},
    doi = {10.1214/25-EJP1332},
    URL = {https://doi.org/10.1214/25-EJP1332}
}

@article{kolesnik2025critical,
  title={Critical beta-splitting, via contraction},
  author={Kolesnik, Brett},
  journal={Electronic Communications in Probability},
  volume={30},
  pages={1--14},
  year={2025},
  publisher={The Institute of Mathematical Statistics and the Bernoulli Society}
}

@article{brandenberger2025asymptoticsharmonicdescentchain,
      title={Asymptotics for the harmonic descent chain and applications to critical beta-splitting trees}, 
      author={Anna Brandenberger and Byron Chin and Elchanan Mossel},
      year={2025},
      journal={arXiv preprint arXiv:2505.24821},
}

@article{huffman1952method,
    author={Huffman, David A.},
  journal={Proceedings of the IRE}, 
  title={A Method for the Construction of Minimum-Redundancy Codes}, 
  year={1952},
  volume={40},
  number={9},
  pages={1098-1101},
  doi={10.1109/JRPROC.1952.273898}
}

@article{glassey1976optimality,
  title={On the optimality of {H}uffman trees},
  author={Glassey, CR and Karp, RM},
  journal={SIAM Journal on Applied Mathematics},
  volume={31},
  number={2},
  pages={368--378},
  year={1976},
  publisher={SIAM}
}

@book{marshall2011inequalities,
  title={Inequalities: Theory of Majorization and Its Applications},
  author={Marshall, Albert W. and Olkin, Ingram and Arnold, Barry C.},
  edition={2nd},
  year={2011},
  publisher={Springer},
  address={New York, NY},
  series={Springer Series in Statistics},
  doi={10.1007/978-0-387-68276-1},
  isbn={978-0-387-40087-7},
  url={https://link.springer.com/book/10.1007/978-0-387-68276-1}
}

@article{janson2026fringetreespatriciatries,
      title={Fringe trees of {P}atricia tries, compressed binary search trees, and three other random full binary trees}, 
      author={Svante Janson},
      year={2024},
      journal={arXiv preprint arXiv:2405.01239},
}

@article {HJ:17,
    AUTHOR = {Holmgren, Cecilia and Janson, Svante},
     TITLE = {Fringe trees, {C}rump--{M}ode--{J}agers branching processes and
              {$m$}-ary search trees},
   JOURNAL = {Probab. Surv.},
  FJOURNAL = {Probability Surveys},
    VOLUME = {14},
      YEAR = {2017},
     PAGES = {53--154},
      ISSN = {1549-5787},
   MRCLASS = {60C05 (05C05 05C80 60J80 60J85 68P05 68P10)},
  MRNUMBER = {3626585},
MRREVIEWER = {Nicolas\ Broutin},
       DOI = {10.1214/16-PS272},
       URL = {https://doi.org/10.1214/16-PS272},
}

@article {A:91,
    AUTHOR = {Aldous, David},
     TITLE = {Asymptotic fringe distributions for general families of random
              trees},
   JOURNAL = {Ann. Appl. Probab.},
  FJOURNAL = {The Annals of Applied Probability},
    VOLUME = {1},
      YEAR = {1991},
    NUMBER = {2},
     PAGES = {228--266},
      ISSN = {1050-5164,2168-8737},
   MRCLASS = {60C05 (05C80)},
  MRNUMBER = {1102319},
MRREVIEWER = {Jos\'e\ L.\ Palacios},
       URL =
              {http://links.jstor.org/sici?sici=1050-5164(199105)1:2<228:AFDFGF>2.0.CO;2-8&origin=MSN},
}

@article{janson2016asymptotic,
author = {Janson, Svante},
title = {Asymptotic normality of fringe subtrees and additive functionals in conditioned {G}alton--{W}atson trees},
journal = {Random Structures \& Algorithms},
volume = {48},
number = {1},
pages = {57-101},
doi = {https://doi.org/10.1002/rsa.20568},
url = {https://onlinelibrary.wiley.com/doi/abs/10.1002/rsa.20568},
eprint = {https://onlinelibrary.wiley.com/doi/pdf/10.1002/rsa.20568},
year = {2016}
}

@article {cai2017study,
    AUTHOR = {Cai, Xing Shi and Devroye, Luc},
     TITLE = {A study of large fringe and non-fringe subtrees in conditional
              {G}alton--{W}atson trees},
   JOURNAL = {ALEA Lat. Am. J. Probab. Math. Stat.},
  FJOURNAL = {ALEA. Latin American Journal of Probability and Mathematical
              Statistics},
    VOLUME = {14},
      YEAR = {2017},
    NUMBER = {1},
     PAGES = {579--611},
      ISSN = {1980-0436},
   MRCLASS = {60C05 (60J80)},
  MRNUMBER = {3667924},
MRREVIEWER = {Tatyana\ S.\ Turova},
       DOI = {10.30757/alea.v14-29},
       URL = {https://doi.org/10.30757/alea.v14-29},
}

@inproceedings{flajolet1990analytic,
  title={Analytic variations on the common subexpression problem},
  author={Flajolet, Philippe and Sipala, Paolo and Steyaert, Jean-Marc},
  booktitle={Automata, Languages and Programming (ICALP 1990)},
  series={Lecture Notes in Computer Science},
  volume={443},
  pages={220--234},
  year={1990},
  publisher={Springer}
}

@article{flajolet1997patterns,
  title={Patterns in random binary search trees},
  author={Flajolet, Philippe and Gourdon, Xavier and Mart{\'\i}nez, Conrado},
  journal={Random Structures \& Algorithms},
  volume={11},
  number={3},
  pages={223--244},
  year={1997},
  publisher={Wiley}
}

@article{devroye1998richness,
  title={On the richness of the collection of subtrees in random binary search trees},
  author={Devroye, Luc},
  journal={Information Processing Letters},
  volume={65},
  number={4},
  pages={195--199},
  year={1998},
  publisher={Elsevier}
}

@inproceedings{ralaivaosaona2015repeated,
  title={Repeated fringe subtrees in random rooted trees},
  author={Ralaivaosaona, Dimbinaina and Wagner, Stephan},
  booktitle={Proceedings of the Twelfth Workshop on Analytic Algorithmics and Combinatorics (ANALCO 2015)},
  pages={78--88},
  year={2015},
  publisher={SIAM}
}

@article{seelbach2022distinct,
  title={Distinct fringe subtrees in random trees},
  author={Seelbach Benkner, Louisa and Wagner, Stephan},
  journal={Algorithmica},
  volume={84},
  number={12},
  pages={3686--3728},
  year={2022},
  publisher={Springer}
}

@inproceedings{seelbach2020collection,
  title={On the collection of fringe subtrees in random binary trees},
  author={Seelbach Benkner, Louisa and Wagner, Stephan},
  booktitle={LATIN 2020: Theoretical Informatics},
  series={Lecture Notes in Computer Science},
  volume={12118},
  pages={546--558},
  year={2020},
  publisher={Springer}
}

@inproceedings{wagner2024distinct,
  title={On the number of distinct fringe subtrees in binary search trees},
  author={Wagner, Stephan},
  booktitle={35th International Conference on Probabilistic, Combinatorial and Asymptotic Methods for the Analysis of Algorithms (AofA 2024)},
  series={LIPIcs},
  volume={302},
  pages={13:1--13:15},
  year={2024},
  publisher={Schloss Dagstuhl -- Leibniz-Zentrum f\"ur Informatik}
}

\end{document}